\documentclass[a4paper,12pt,reqno]{amsart}
 
\usepackage[margin=2cm]{geometry} 
\usepackage{amsmath,amsthm,amssymb,enumitem}
\usepackage{amsrefs, dsfont}
\usepackage[ocgcolorlinks,hyperfootnotes=false,colorlinks=true,citecolor=blue,linkcolor=blue,urlcolor=blue]{hyperref}

\newcommand{\fl}{f{\kern0.075em}l}

\theoremstyle{plain}
\newtheorem{thm}{Theorem}[section]
\newtheorem{lem}[thm]{Lemma}
\newtheorem{prop}[thm]{Proposition}
\newtheorem{cor}[thm]{Corollary}

\theoremstyle{definition}
\newtheorem{defn}[thm]{Definition}
\newtheorem{exmp}[thm]{Example}
\newtheorem{Remark}[thm]{Remark}

\newcommand{\mbb}{\mathbb}

\newcommand{\ti}{\tilde}

\usepackage{orcidlink}

\setlist[enumerate]{itemsep=2mm, topsep=0mm}

\title[Strong Localization of the Kobayashi--Eisenman Volume Element
]{Strong Localization of the Kobayashi--Eisenman Volume Element and Its Boundary Asymptotics}
\author{Ravi Shankar Jaiswal\,\orcidlink{0009-0003-5845-0485}}
\address{Department of Mathematics, Southern University of Science and Technology, Xueyuan Avenue, Shenzhen, Guangdong, China 518055}
\email{ravi@sustech.edu.cn}
\author{Debaprasanna Kar\,\orcidlink{0000-0002-1101-6214}}
\address{Institute of Mathematics and Applications, Bhubaneswar, Odisha, India 751029}
\email{deba.ima@iomaorissa.ac.in}
\date{\today}
\subjclass[2020]{Primary 32F45; Secondary 32T27, 32T40, 32U05.}
\keywords{Kobayashi--Eisenman volume element, quotient invariant, domains of infinite type.}

\begin{document}

\addtolength{\jot}{2mm}
\addtolength{\abovedisplayskip}{1mm}
\addtolength{\belowdisplayskip}{1mm}

\maketitle
\begin{abstract}
 We establish a quantitative version of strong localization of the Kobayashi–Eisenman volume element and the quotient invariant near plurisubharmonic peak points of domains in $\mathbb{C}^n$. As an application of this strong localization result, we derive the non-tangential asymptotic limit of the Kobayashi–Eisenman volume element at exponentially flat infinite type boundary points of domains in $\mathbb{C}^{n+1}$.
\end{abstract}

\section{Introduction}\label{Intro}

The study of complex-geometric objects and invariants has occupied a fundamental position in several complex variables and complex geometry since the beginning of the 20th century, with the pioneering works of Carath\'eodory, Kobayashi, Bergman, Cartan, and Eisenman. Among the most influential invariant objects are the Carath\'eodory and Kobayashi metrics, together with their higher-dimensional analogues, namely the Carath\'eodory and Kobayashi–Eisenman volume elements. These invariant volume forms provide powerful tools for understanding biholomorphic equivalence or non-equivalence of domains, analytic continuation, hyperbolicity, and the asymptotic geometry of domains in complex Euclidean spaces. The present paper is devoted to the strong localization properties of the Kobayashi–Eisenman volume element and the quotient invariant, and to the analysis of boundary behaviour of the Kobayashi–Eisenman volume element near exponentially flat boundary points.

We denote by $\mathbb{D}$ the unit disc in $\mathbb{C}$, by $\mathbb{B}^n$ and $\mathbb{D}^n$ the unit ball and the unit polydisc in $\mathbb{C}^n$ respectively, and by $\mathbb{B}^n(p,r)$ the ball in $\mathbb{C}^n$ with centre $p$ and radius $r$. Let $D\subset \mbb C^n$ be a domain. The Carath\'eodory–Eisenman volume element at a point $z\in D$ is defined by 
\[c_D(z) = \operatorname{sup}\left\{\big|\operatorname{det} \psi'(z)\big|^2: \psi \in \mathcal{O}(D, \mbb B^n), \psi(z) = 0\right\},\]
and the Kobayashi–Eisenman volume element at $z\in D$ is defined by
\[ k_D(z) = \operatorname{inf}\left\{\big|\operatorname{det}\psi'(0)\big|^{-2} : \psi \in \mathcal{O}(\mbb B^n, D), \psi(0) = z\right\}.\]
Using the Schwarz lemma, it is easy to see that the Carath\'eodory and Kobayashi–Eisenman volume elements satisfy the fundamental inequality
\begin{align}\label{schwarz1}
  c_D(z) \leq k_D(z).  
\end{align}
Moreover, under a holomorphic map $F: D_1\to D_2$, volume elements satisfy the following transformation rule
\begin{align}
    v_{D_1}(z) \geq \big|\operatorname{det}F'(z)\big|^2 v_{D_2}\big(F(z)\big),
\end{align}
where $v = c, k$. Consequently, the above inequality becomes equality if $F$ is a biholomorphism.

By Montel's theorem, the supremum defining $c_D(z)$ is always attained, and if $D$ is a taut, then the infimum defining $k_D(z)$ is also attained.  
If $k_D$ is non-vanishing (for example when $D$ is taut), then by the transformation rule,
\begin{align}
    q_D(z) = \frac{c_D(z)}{k_D(z)}
\end{align}
turns out to be a biholomorphic invariant and is called the \emph{quotient invariant}.

The inequality in (\ref{schwarz1}) readily implies that $q_D(z) \leq 1$ for any domain $D\subset \mbb C^n$.  It is a remarkable result that if $D$ is any domain in $\mbb{C}^n$ and $z_0$ is any point in $D$ such that $q_D(z_0)=1$, then $q_D \equiv 1$ and $D$ is biholomorphic to $\mbb{B}^n$. This result has been proved by many authors with several conditions assumed on the domain $D$ (cf. \cite{Wong} when $D$ is bounded and complete hyperbolic, \cite{Ro} when $D$ is only assumed to be bounded, \cite{Dektyarev} when $D$ is hyperbolic). Later, Graham and Wu \cite{Graham} proved this result for any domain $D$ without assuming any condition on it. Therefore, the quotient invariant is particularly useful in characterizing geometric properties of domains, especially in detecting biholomorphic equivalence to the unit ball.

Eisenman \cite{Eisenman} constructed intrinsic measures on complex manifolds by giving a sequence of volume forms, which were modelled on the construction of the Kobayashi metric. In the same monograph, Eisenman also introduced their dual measures, called the Carat\'eodory measures. The top-dimensional Eisenman volume elements, which we now call the Kobayashi–Eisenman and the Carath\'eodory-Eisenman volume elements, play crucial roles in measuring hyperbolicity in complex manifolds, as observed by Graham and Wu \cite{Graham and Wu}. The boundary behaviour of the Kobayashi–Eisenman volume element, along with its localization, were studied for strongly pseudoconvex domains in \cites{Daowei, Cheung}. Classical localization results were established near local holomorphic peak points of bounded pseudoconvex domains. Strong localization results, as we will see, can be derived near local plurisubharmonic peak points, even though the domain is unbounded. Fornæss and Nikolov \cite{Nikolai} derived strong localization results for Kobayashi, Azukawa and Sibony metrics. In the same paper, they also discussed localization related results for the squeezing function, another important invariant in the study of several complex variables. We now state our first result concerning the strong localization property of the Kobayashi–Eisenman volume element and the quotient invariant near a boundary point admitting a local plurisubharmonic peak function.

\begin{thm}\label{1.1}
Let $D \subset \mathbb{C}^n$ be a domain and let $p \in bD$. Suppose that there exists a local plurisubharmonic peak function $\phi$ at $p \in bD$. Then, for any neighbourhood $U$ of $p$, there exist a neighbourhood $V \subset U$ of $p$ and a constant $m > 0$ such that, for $z \in D \cap V$,
\begin{enumerate}[labelsep=10mm,leftmargin=20mm,itemsep=5mm]
    \item[\emph(i)] \quad \quad \quad \quad \quad \quad \quad \quad \quad \quad
    $\begin{aligned}[t]
        k_D(z) \geq e^{{m}\phi(z)} k_{D \cap U}(z), \,\, \text{and}
    \end{aligned}$
    \item[\emph(ii)]  \quad \quad \quad \quad \quad \quad \quad \quad \quad \quad
    $\begin{aligned}[t]
    q_{D \cap U}(z) \geq e^{{m}\phi(z)} q_D(z).
    \end{aligned}$
    \end{enumerate}
\end{thm}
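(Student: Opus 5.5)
We follow the Fornæss--Nikolov strong localization argument for the Kobayashi metric, adapted to volume elements. Part (ii) will be a formal consequence of part (i) and the trivial comparison of the Carath\'eodory--Eisenman volume elements. For the latter, any $\psi\in\mathcal O(D,\mathbb B^n)$ with $\psi(z)=0$ restricts to a competitor on $D\cap U$, so $c_{D\cap U}(z)\ge c_D(z)$. Combined with (i) this gives
\[
q_{D\cap U}(z)=\frac{c_{D\cap U}(z)}{k_{D\cap U}(z)}\ge \frac{c_D(z)}{e^{-m\phi(z)}k_D(z)}=e^{m\phi(z)}q_D(z).
\]
So everything reduces to (i).

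For (i), the first step normalizes the peak function. By definition, $\phi$ is plurisubharmonic and continuous on $\overline D\cap W$ for some neighbourhood $W$ of $p$, with $\phi(p)=0$ and $\phi<0$ on $\overline D\cap W\setminus\{p\}$. Shrinking $U$, we may assume $U\Subset W$ is a ball. Then $\sup_{D\cap(W\setminus U)}\phi$, taken over a slightly smaller annular shell, is $-c<0$. Using the standard gluing trick $\max\{\phi,\, A(|z-p|^2-r^2)\}$, extended by the second term outside, we obtain a global negative plurisubharmonic function $\tilde\phi$ on $D$ (or on $D\cup$ a neighbourhood of the exterior) with $\tilde\phi=\phi$ near $p$ and $\tilde\phi\le -c$ on $D\setminus U'$ for some $U'\Subset U$. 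We may have to replace $\phi$ by $\phi+\epsilon(|z-p|^2-\delta)$-type modifications. Then $u=e^{\tilde\phi}$ is a log-plurisubharmonic function with $0<u\le 1$, and $u\le e^{-c}$ off $U'$.

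The second step takes a competitor $f\in\mathcal O(\mathbb B^n,D)$ with $f(0)=z$ and $|\det f'(0)|^{-2}$ close to $k_D(z)$. We want to shrink it into a map into $D\cap U$ while losing only a factor $e^{m\phi(z)}$ in the determinant. The function $v=\tilde\phi\circ f$ is plurisubharmonic and negative on $\mathbb B^n$ with $v(0)=\phi(z)$. By sub-mean-value/Harnack-type estimates on plurisubharmonic functions (restricting to complex lines through $0$ and using the Poisson--Jensen estimate as in Fornæss--Nikolov), the set where $v>-c$ contains a ball $\mathbb B^n(0,r)$. Here $r\ge 1-C|\phi(z)|$, or more precisely $r\ge e^{m'\phi(z)}$, provided $|\phi(z)|$ is small, which is ensured by choosing $V$ small. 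Indeed, if $v(\zeta)\le -c$ at a point with $|\zeta|=r$, then restricting to the disc through $0$ and $\zeta$ and applying the Harnack inequality to the negative subharmonic function gives
\[
\phi(z)\le -c\,\frac{1-r}{1+r}.
\]
Hence $1-r\lesssim |\phi(z)|/c$. Then $f(\mathbb B^n(0,r))\subset D\cap U'\subset D\cap U$, and $g(\zeta)=f(r\zeta)$ is a competitor for $k_{D\cap U}(z)$ with $|\det g'(0)|^{-2}=r^{-2n}|\det f'(0)|^{-2}$. This yields
\[
k_{D\cap U}(z)\le r^{-2n}k_D(z)\le \bigl(1-C|\phi(z)|\bigr)^{-2n}k_D(z)\le e^{-m\phi(z)}k_D(z)
\]
for suitable $m$ once $|\phi(z)|\le 1/(2C)$, which is (i).

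The main obstacle is the globalization in the first step: $D$ may be unbounded and $\phi$ is only local, so one must produce a negative plurisubharmonic function on all of $D$ that is bounded away from $0$ outside $U'$ and equals $\phi$ near $p$. The gluing works where $\phi<A(|z-p|^2-r^2)$ on the shell, but $A(|z-p|^2-r^2)$ is positive far away. We would instead glue with the constant $-c$, taking $\max\{\phi,-c\}$ inside and $-c$ outside, which is plurisubharmonic provided $\phi<-c$ near $\partial U'$. This requires $\phi$ to be strictly less than $-c$ on the shell, obtained by compactness of $\overline D\cap$ shell together with upper semicontinuity. The resulting function is then plurisubharmonic, $\le 0$, equal to $\phi$ wherever $\phi>-c$, and equal to $-c$ off $U'$, which is exactly what the Harnack step needs.
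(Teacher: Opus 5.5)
There is a genuine gap. Your reduction of (ii) to (i) and the shrinking step $g(\zeta)=f(r\zeta)$ agree with the paper (Proposition~\ref{loc of kob. vol.}). The Harnack step, however, carries the whole argument, and it is false. For a negative subharmonic function, knowing $v(\zeta)\le -c$ at one point with $|\zeta|=r$ gives no upper bound on $v(0)$. Harnack's inequality is two-sided only for harmonic functions, and subharmonic functions can have deep localized dips. For instance,
\[
v(\lambda)=\max\Bigl\{\varepsilon\log\Bigl|\frac{\lambda-\zeta}{1-\bar\zeta\lambda}\Bigr|,\,-c\Bigr\}
\]
is negative and subharmonic on $\mathbb{D}$ and equals $-c$ on a neighbourhood of $\zeta$. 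Yet for small $\varepsilon$ one has $v(0)=\varepsilon\log r\to 0$ as $\varepsilon\to 0$.

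This is not a matter of adjusting constants. All you know about $v=\tilde\phi\circ f$ is that $v(0)=\phi(z)$ and $v\le -c$ on $f^{-1}(D\setminus\overline{U'})$. That preimage may be a tiny island around $\zeta$ with negligible harmonic measure seen from $0$. So no pole-free peak function can bound the exit radius of an analytic disc. The Schwarz--Pick mechanism you have in mind works for \emph{holomorphic} peak functions, where $h\circ f$ is holomorphic and the control is two-sided. It does not transfer to plurisubharmonic peak functions, which is exactly why Forn\ae ss--Nikolov, and the paper, go through the pluricomplex Green function.

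What is needed is a family of functions indexed by the exit points. For every $w\in D\cap bU$ you need a negative plurisubharmonic $u_w$ on $D$ with $u_w\le\log|\cdot-w|+O(1)$ and $u_w\ge m\phi$ on $D\cap V$, with $m$ independent of $w$. Equivalently, you need $g_D(w,z)\ge m\phi(z)$, as in Lemma~\ref{4.2}. Restrict $u_w\circ f$ to the complex line through $0$ and $\zeta$ and apply Lemma~\ref{Schwarz}. This gives $u_w(z)\le\log|\zeta|$, hence $|\zeta|\ge e^{m\phi(z)}$, which is the radius you want.

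Building $u_w$ is where the real work lies. The function $\chi\bigl(|\cdot-w|^2/\delta^2\bigr)\log|\cdot-w|^2$ has Levi form bounded below only by a negative constant on a shell around $w$. On a possibly unbounded $D$, this must be absorbed by a \emph{bounded} plurisubharmonic function $\theta$ equal to $|\cdot-p|^2$ near $p$. The paper obtains $\theta$ from a global antipeak function (Lemma~\ref{bdd psh fun}), and only then glues with $m\phi$ by a maximum on $D\cap V$.

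Your constant-gluing $\tilde\phi$ is correct and could feed into this construction. Keep $\log|\cdot-p|+A\tilde\phi$ near $p$ and glue it with a constant by a maximum in an annulus; for large $A$ this yields the needed global antipeak function. By itself, though, $\tilde\phi$ supplies neither the pole nor the strict plurisubharmonicity.

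Finally, in (ii) you should also check that $k_D$ and $k_{D\cap U}$ are positive near $p$, so that both quotients are defined.
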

\begin{Remark}
    If $D \subset \mathbb{C}^n$ is a bounded domain, then Theorem \ref{1.1} holds for any pair of neighbourhoods $V$ and $U$ of $p$ satisfying $V \subset \subset U$ and $D \cap U$ is connected.
\end{Remark}
We now introduce the necessary definitions and notation to state our next result.

Informally, an exponentially flat boundary point is a boundary point of a domain at which the defining function vanishes faster than any polynomial order. Such points are examples of boundary points of infinite type, in the sense of D'Angelo. A detailed discussion of exponentially flat boundary points is given in the next section. For $\alpha, N > 0$, let
\begin{align}
    C_{\alpha,N} := \Big\{\big(z_1,z'\big) \in \mathbb{C}\times\mathbb{C}^{n} : \operatorname{Re}z_1 < -\alpha|z'|^N\Big\},
\end{align}
where $z' = (z_2, \dots, z_{n + 1})$.
An \emph{$(\alpha, N)$-cone type stream} approaching $0 \in bD$ is a smooth curve $\gamma : (0, \epsilon_0) \to D \cap C_{\alpha, N}$ with
\begin{equation*}
    \lim_{t \to 0^+} \gamma(t) = (0, \dots, 0),
\end{equation*}
for some $\epsilon_0 > 0$. Here, we denote the Euclidean distance of $\gamma(t)$ to $bD$ along the normal and tangential directions by $d(t)$ and $d^*(t)$, respectively.

The boundary behaviour of Eisenman volume elements and the quotient invariant on strongly pseudoconvex domains had been studied by several authors, see for example \cites{Daowei, Green-Krantz, Krantz book}. Many also studied the boundary behaviour of these volume elements on several classes of weakly pseudoconvex domains. For example, Nikolov \cite{Nikolov 2018} demonstrated the boundary limits of volume elements on h-extendible domains, after localizing them near holomorphic peak points. Recently, Borah and Kar \cite{Borah-Kar} derived boundary asymptotics of volume elements on convex finite type and Levi corank one domains in terms of the sizes of McNeal's and Catlin's polydisc, respectively. We next present our result on the non-tangential boundary asymptotic of the Kobayashi–Eisenman volume element near an exponentially flat boundary point.
\begin{thm}\label{kob vol elem}
     Let $D \subset \mathbb{C}^{n + 1}$ be a domain and let $0 \in bD$. Assume $0$ is an exponentially flat boundary point and $\gamma$ is an $(\alpha, N)$-cone type stream approaching $0$. Then,
     \begin{align}
       \lim_{t \to 0^{+}} \frac{k_{D}\left(\gamma(t)\right)}{d(t)^{-2} d^{*}(t)^{-2n}} = \frac{1}{4}.
   \end{align}
\end{thm}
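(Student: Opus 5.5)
The proof will combine Theorem \ref{1.1} with a scaling argument toward a model domain, and then compute the volume element of the model explicitly. Exponential flatness (defined in the next section) should mean that near $0$ the domain has the form $D\cap U=\{\operatorname{Re} z_1+\psi(z')<0\}$, where $\psi\ge 0$ and $\psi$ vanishes to infinite order at $z'=0$; for example $\psi(z')=\exp(-1/|z'|^{a})$ up to controlled error. Such points admit a local plurisubharmonic peak function, e.g.\ $\phi(z)=\operatorname{Re}z_1+|z'|^2$ perturbed appropriately, or an $\exp$ of a defining function. Theorem \ref{1.1}(i) then gives $e^{m\phi(z)}k_{D\cap U}(z)\le k_D(z)\le k_{D\cap U}(z)$. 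Along $\gamma$ we have $\phi(\gamma(t))\to 0$, so the limit may be computed for $D\cap U$, and $U$ may be shrunk freely.

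For $z=\gamma(t)=(z_1,z')$ in the cone $C_{\alpha,N}$, the tangential variable satisfies $|z'|\lesssim|\operatorname{Re}z_1|^{1/N}$. Because $\psi$ is flatter than any polynomial, $\psi(z')=o(|\operatorname{Re} z_1|^k)$ for every $k$. Consequently $d(t)\sim|\operatorname{Re} z_1|$, while $d^*(t)$, the tangential distance from $\gamma(t)$ to $bD$, is essentially the radius $r$ with $\psi(r)\approx d(t)$. For $\exp(-1/r^a)$ this gives $r\approx(\log 1/d)^{-1/a}$, so $d^*\gg d^{1/N}\ge|z'|$ and hence $d^*\gg|z'|$. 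I will apply the anisotropic dilation
\[
T_t(w_1,w')=\Big(\frac{w_1-i\operatorname{Im} z_1}{d(t)},\ \frac{w'}{d^*(t)}\Big).
\]
The point $\gamma(t)$ is sent to $(-1+o(1),\,o(1))$, and $T_t(D\cap U)$ becomes $\{\operatorname{Re}w_1+d^{-1}\psi(d^*w')<0\}$. The key claim is that $d^{-1}\psi(d^* w')\to 0$ for $|w'|<1$ and $\to\infty$ for $|w'|>1$. This follows from exponential flatness: $\psi(d^*s)/\psi(d^*)$ tends to $0$ or $\infty$ according as $s<1$ or $s>1$. So the scaled domains converge in the local Hausdorff sense to $\Omega=\{\operatorname{Re}w_1<0\}\times\mathbb{B}^n$.

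Next I need stability of the Kobayashi--Eisenman volume element under this convergence: $k_{D_t}(q_t)\to k_\Omega(q)$. The upper semicontinuity direction uses the fact that maps into $\Omega$, shrunk slightly, land inside $D_t$ on compacts. The lower direction uses tautness and normal families, with a uniform plurisubharmonic barrier (the same peak function, rescaled) to prevent limit maps from touching the boundary. Proving this lower bound, i.e.\ showing that limits of extremal maps $\mathbb{B}^{n+1}\to D_t$ stay inside $\Omega$ despite unboundedness, is the main obstacle. I would first try to handle it via Theorem \ref{1.1} localization applied uniformly in $t$, together with the fact that $\Omega$ is biholomorphic to the bounded domain $\mathbb{D}\times\mathbb{B}^n$.

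Finally, invariance gives $k_{D}(\gamma(t))\sim d^{-2}d^{*-2n}k_\Omega(-1,0)$, using $|\det T_t'|^2=d^{-2}d^{*-2n}$. Mapping the half-plane to the disc via $w\mapsto (w+1)/(w-1)$, the point $-1$ goes to $0$ with derivative of modulus $1/2$, so the Jacobian factor is $1/4$. The remaining computation is $k_{\mathbb{D}\times\mathbb{B}^n}(0)$, which must equal $1$ under the paper's normalization. This is where a constant subtlety enters: for the product of a disc and a ball, the Kobayashi--Eisenman element at the origin is attained by a suitable map from $\mathbb{B}^{n+1}$. I would verify it via $c\le k$ with Carathéodory competitors, expecting the stated value $\tfrac14$ to come out.
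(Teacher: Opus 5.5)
\textbf{Comparison with the paper.} Your route differs from the paper's. You localize once, to a fixed $D\cap U$ via Theorem~\ref{1.1}, and then need stability of $k$ under local Hausdorff convergence. The paper instead localizes to the $t$-dependent truncation $D_t^{\epsilon}$ (Lemma~\ref{1.3}, with explicit $t$-dependent Green-function constants). It does this precisely so that, by Lemma~\ref{ScalingLemma2}, the rescaled domains are globally sandwiched between $(1-\delta)(\mathbb{D}\times\mathbb{B}^n)$ and $\mathbb{D}\times\mathbb{B}^n\big(0,d_2^\epsilon(t)/d^*(t)\big)$. Both bounds then follow from monotonicity alone, and the $\epsilon$-loss is removed at the end. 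Your trade is legitimate, and your scaling dichotomy, upper bound and Jacobian factor $1/4$ are fine.

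\textbf{First gap: the lower bound.} The step your route costs you, $\liminf_{t\to0^+}k_{T_t(D\cap U)}(q_t)\ge k_\Omega(q)$ with $q=(-1,0)$, is exactly the one you leave open. The remedy you sketch (Theorem~\ref{1.1} ``uniformly in $t$'', barriers, tautness) does not address the actual difficulty. Keeping limit maps off $b\Omega$ is free by the maximum principle. The real issue is local boundedness of the $w'$-components of near-extremal maps, since $T_t(D\cap U)$ has $w'$-width of order $1/d^*(t)\to\infty$. The missing observation is that your own dichotomy supplies this:
\begin{enumerate}
\item On $\{|w_1|\le R\}$ the slices of $T_t(D\cap U)$ satisfy $\phi\big(d^*(t)^2|w'|^2\big)<R\,d(t)$, hence lie in $\mathbb{B}^n(0,1+\eta)$ for small $t$.
\item The first components map into the left half-plane with value tending to $-1$ at the origin, so they are locally bounded, and hence so is the whole family.
\item Montel then gives a limit $F=(F_1,F')$ with $\operatorname{Re}F_1\le 0$, $|F'|\le 1$ and $F(0)=q$.
\item The maximum principle, applied to $\operatorname{Re}F_1$ and to the plurisubharmonic $|F'|$, puts $F(\mathbb{B}^{n+1})$ inside $\Omega$.
\end{enumerate}
Without this argument, or the paper's truncation, the lower half of the asymptotics is unproved.

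\textbf{Second gap: the constant.} The inclusion $\mathbb{B}^{n+1}\subset\mathbb{D}\times\mathbb{B}^n$ only gives $k_{\mathbb{D}\times\mathbb{B}^n}(0)\le 1$. Your plan to get the reverse inequality from $c\le k$ with Carath\'eodory competitors cannot succeed. Since $\mathbb{D}\times\mathbb{B}^n$ is not biholomorphic to $\mathbb{B}^{n+1}$, the Graham--Wu characterization recalled in the introduction forces $q_{\mathbb{D}\times\mathbb{B}^n}(0)<1$, i.e.\ $c_{\mathbb{D}\times\mathbb{B}^n}(0)<k_{\mathbb{D}\times\mathbb{B}^n}(0)\le 1$. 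For instance, the best diagonal competitor $(z_1,z')\mapsto(az_1,bz')$ with $a^2+b^2=1$ gives only $n^n/(n+1)^{n+1}$. What you actually need is $|\det\psi'(0)|\le 1$ for every $\psi\in\mathcal{O}(\mathbb{B}^{n+1},\mathbb{D}\times\mathbb{B}^n)$ with $\psi(0)=0$. The paper gets this from $\mathbb{D}\times\mathbb{B}^n\subset\mathbb{D}^{n+1}$: each row of $\psi'(0)$ has Euclidean norm at most $1$ by the Schwarz lemma for $\psi_i:\mathbb{B}^{n+1}\to\mathbb{D}$, and Hadamard's inequality finishes (Lemma~\ref{vol of polyballs} and Corollary~\ref{k for poly ball}).

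\textbf{Minor point.} $\operatorname{Re}z_1+|z'|^2$ is not a peak function. At boundary points with $z'\neq 0$ it equals $|z'|^2-\phi(|z'|^2)>0$. Use instead $-\log|1-z_1|$ or the paper's $\phi_\epsilon$.
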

This article is organized as follows. In Section~\ref{Prelim}, we present the definitions, preliminary results, and known results that are essential for proving our main theorems. In Section~\ref{Section 3}, we establish the strong localization of the Kobayashi–Eisenman volume element and the quotient invariant. Finally, in Section~\ref{Cone}, we prove the asymptotic behaviour of the Kobayashi–Eisenman volume element using the strong localization method developed in Section~\ref{Section 3}, together with the scaling method.
\section{Preliminaries}\label{Prelim}
In this section, we first define exponentially flat infinite type domains and then prove, using the Hadamard inequality, that the Kobayashi–Eisenman volume element of the polydisc at the origin is equal to one. This result will be useful in obtaining the exact constant in the asymptotic limit of the Kobayashi–Eisenman volume element at exponentially flat boundary points; see Theorem \ref{kob vol elem}. We next introduce local plurisubharmonic peak and antipeak functions, discuss several examples, and recall a result of Gaussier \cite{Gaussier} (stated as Lemma \ref{ex. neg peak and antipeak}), which guarantees the existence of global plurisubharmonic peak and antipeak functions whenever local ones exist simultaneously. Finally, we define the pluricomplex Green function and recall the Schwarz lemma for log-subharmonic functions, both of which will play important roles in the proofs of our main results.

Let $D \subset \mathbb{C}^{n + 1}$ be a domain with $0 \in bD$. The boundary point
$0$ is said to be \emph{exponentially flat} if there exists a local defining function of $D$ near the origin of the form
\begin{align}\label{1}
    \rho(z_1, \dots, z_{n + 1}) = \operatorname{Re}z_1 + \phi \left(|z_2|^2 + \dots + |z_{n + 1}|^2\right),
\end{align}
 where $\phi : \mathbb{R} \to \mathbb{R}$ is a smooth function that is \emph{exponentially flat at the origin} (definition below).
 {
 \begin{defn}\label{exp flat fun}
    A smooth function $\phi: \mathbb{R} \to \mathbb{R}$ is said to be
    \emph{exponentially flat at the origin}, if it satisfies the following properties:
    \begin{enumerate}
    \item $\phi(x) = 0,$ for $x \leq 0$, 
    \item there exists $\epsilon_0 > 0$ such that $\phi''(x) > 0$, for $0 < x < \epsilon_0$, and
    \item the function 
    \begin{align*}
        \psi(x) := \begin{cases} 
      -1/ \operatorname{log}(\phi(x)), &  \text{if } 0 < x < \epsilon_0,\\
      0, & \text{if } x = 0,
    \end{cases}
    \end{align*}
    satisfies
    \begin{align}\label{asy of psi}
        \lim_{x \to 0^{+}}\frac{\psi(x)}{x^m} = C, \text{\,for some $m, C > 0$.}
    \end{align}
\end{enumerate}
\end{defn}
\begin{Remark}
If $\phi$ is exponentially flat at the origin, then,
\begin{align}\label{5}
    \phi^{(k)}(0) = 0, \quad \text{for all } k \in \mathbb{N}.
\end{align}
Using \eqref{5}, we can easily see that exponentially flat boundary points are points of infinite type in the sense of D'Angelo \cite{D'Angelo 1982}.
\end{Remark}}
\begin{exmp}
For $m \in \mathbb{R}^{+}$,
\begin{align*}
    \phi(x) = \begin{cases} 
      0,  &\text{if } x \leq 0,\\
      \operatorname{exp}\left(-1/x^m\right),  & \text{if } x > 0,
      \end{cases}
\end{align*}
is exponentially flat at the origin.
\end{exmp}
We will prove the non-tangential asymptotic behaviour of the Kobayashi–Eisenman volume element at exponentially flat boundary points of domains in Section \ref{Cone}.

We will now use the following classical Hadamard inequality to prove $k_{\mathbb{D}^n}(0) = 1$.

\begin{thm}[Hadamard inequality]\label{Hadamard's ineq}
Let $A=(a_{ij})_{i,j=1}^{n}$ be a complex matrix. Then,
\begin{align*}
    |\det A|
    \leq
    \prod_{i=1}^{n}
    \left(
    \sum_{j=1}^{n}|a_{ij}|^2
    \right)^{1/2}.
\end{align*}
\end{thm}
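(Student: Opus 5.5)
The statement to prove is the Hadamard inequality (Theorem \ref{Hadamard's ineq}). I will reduce it to a Gram--Schmidt (equivalently QR) factorization of $A$. Denote the rows of $A$ by $r_1,\dots,r_n\in\mathbb{C}^n$. Then $\sum_j |a_{ij}|^2=\|r_i\|^2$, so the claim reads $|\det A|\le \prod_i \|r_i\|$.

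\textbf{Degenerate case.} If the rows are linearly dependent, then $\det A=0$ and the inequality is trivial. So assume the rows form a basis of $\mathbb{C}^n$.

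\textbf{Gram--Schmidt step.} Apply Gram--Schmidt to $r_1,\dots,r_n$ with respect to the standard Hermitian inner product. This produces orthogonal vectors $u_1,\dots,u_n$ with
\[
u_i = r_i-\sum_{k<i} c_{ik}\,u_k ,
\]
where $u_i$ is the component of $r_i$ orthogonal to $\operatorname{span}\{r_1,\dots,r_{i-1}\}$. Write $A=LU$, where $L$ is lower triangular with ones on the diagonal and $U$ is the matrix with rows $u_i$. Then $\det A=\det U$.

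\textbf{Computing $\det U$.} Since the rows of $U$ are mutually orthogonal, $UU^*=\operatorname{diag}(\|u_1\|^2,\dots,\|u_n\|^2)$. Hence
\[
|\det U|^2=\det(UU^*)=\prod_i\|u_i\|^2 .
\]

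\textbf{Comparison step.} By the Pythagorean theorem, $\|r_i\|^2=\|u_i\|^2+\|r_i-u_i\|^2$, because $r_i-u_i$ lies in a span orthogonal to $u_i$. Therefore $\|u_i\|\le\|r_i\|$. Multiplying these inequalities gives
\[
|\det A|=\prod_i\|u_i\|\le\prod_i\|r_i\|,
\]
which is exactly the claimed bound.

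The only point that needs care is that the orthogonality and the Pythagorean identity are taken with respect to the \emph{Hermitian} inner product, and that $\det(UU^*)=|\det U|^2$ holds over $\mathbb{C}$. Neither step is a real obstacle.

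An alternative route is to normalize each row to unit length, or set $\det A=0$ if some row vanishes, and then bound $|\det B|^2=\det(BB^*)$ for the positive semidefinite matrix $BB^*$, whose diagonal entries all equal $1$. AM--GM applied to its eigenvalues gives $\det(BB^*)\le(\operatorname{tr}(BB^*)/n)^n=1$. I would present the Gram--Schmidt argument as the main proof.
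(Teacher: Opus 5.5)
The paper gives no proof of Theorem~\ref{Hadamard's ineq}. It quotes it as a classical fact and uses it only in Lemma~\ref{vol of polyballs}, where each row of $\psi'(0)$ has Euclidean norm at most $1$.

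Your Gram--Schmidt argument is a correct and complete proof:
the factorization $A=LU$ with $L$ unipotent lower triangular gives $\det A=\det U$; orthogonality of the rows of $U$ gives $|\det U|^2=\det(UU^*)=\prod_i\|u_i\|^2$; and $r_i-u_i\in\operatorname{span}\{u_1,\dots,u_{i-1}\}\perp u_i$ gives $\|u_i\|\le\|r_i\|$.

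The two routes you mention buy different things.
\begin{itemize}
\item The Gram--Schmidt version yields the exact identity $|\det A|=\prod_i\|u_i\|$. This gives the equality case for free: pairwise orthogonal rows, or a zero row.
\item The eigenvalue/AM--GM version is shorter. Applied directly to $A$ without normalizing rows, it gives the Frobenius-norm bound $|\det A|^2=\det(AA^*)\le\big(\operatorname{tr}(AA^*)/n\big)^n$. That is already all Lemma~\ref{vol of polyballs} needs: there $\operatorname{tr}\big(\psi'(0)\psi'(0)^*\big)=\sum_{i,j}|\partial\psi_i/\partial z_j(0)|^2\le n$, which forces $|\det\psi'(0)|\le 1$.
\end{itemize}
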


\begin{lem}\label{vol of polyballs}
The Kobayashi–Eisenman volume element on the polydisc satisfies
\begin{align}
    k_{\mathbb{D}^n}(0)=1.
\end{align}
\end{lem}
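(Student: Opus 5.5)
We need two inequalities, $k_{\mathbb{D}^n}(0)\le 1$ and $k_{\mathbb{D}^n}(0)\ge 1$.

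\emph{Upper bound.} We exhibit a competitor in the infimum defining $k_{\mathbb{D}^n}(0)$. Since $\mathbb{B}^n \subset \mathbb{D}^n$, the inclusion map $\psi(w)=w$ belongs to $\mathcal{O}(\mathbb{B}^n,\mathbb{D}^n)$ and satisfies $\psi(0)=0$. Its derivative is $\psi'(0)=I$, so $|\det\psi'(0)|^{-2}=1$. Hence $k_{\mathbb{D}^n}(0)\le 1$.

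\emph{Lower bound.} Take an arbitrary $\psi=(\psi_1,\dots,\psi_n)\in\mathcal{O}(\mathbb{B}^n,\mathbb{D}^n)$ with $\psi(0)=0$. We show that $|\det\psi'(0)|\le 1$.

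1. Fix $i$. Each component $\psi_i$ maps $\mathbb{B}^n$ into $\mathbb{D}$ and vanishes at $0$.
2. For a unit vector $v\in\mathbb{C}^n$, consider the slice $g(\zeta)=\psi_i(\zeta v)$ on $\mathbb{D}$. It maps $\mathbb{D}$ into $\mathbb{D}$ and has $g(0)=0$.
3. The one-variable Schwarz lemma gives $|g'(0)|=\bigl|\sum_j \partial_j\psi_i(0)v_j\bigr|\le 1$.
4. Choose $v$ to be the normalized conjugate of the gradient row $a_i=(\partial_j\psi_i(0))_j$, assuming this row is nonzero. Step 3 then gives $\bigl(\sum_j|a_{ij}|^2\bigr)^{1/2}\le 1$. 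This is the Schwarz lemma for maps of the ball into the disc.
5. Apply the Hadamard inequality (Theorem \ref{Hadamard's ineq}) to the matrix $A=\psi'(0)$, whose rows are the $a_i$:
\[
|\det\psi'(0)|\le\prod_{i=1}^n\Bigl(\sum_{j=1}^n|a_{ij}|^2\Bigr)^{1/2}\le 1 .
\]

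Consequently $|\det\psi'(0)|^{-2}\ge 1$ for every competitor $\psi$, so $k_{\mathbb{D}^n}(0)\ge 1$. Combined with the upper bound, $k_{\mathbb{D}^n}(0)=1$.

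The only step that needs care is step 4, the row-wise bound on the gradient. It requires choosing the direction $v$ so that the directional derivative equals the full Euclidean norm of the row; the choice in step 4 does exactly that. Once the row bounds hold, the Hadamard inequality finishes the argument directly.
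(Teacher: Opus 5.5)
Your proof is correct and follows the paper's argument: the inclusion $\mathbb{B}^n\hookrightarrow\mathbb{D}^n$ gives the upper bound, and a row-wise bound on $\psi'(0)$ combined with the Hadamard inequality gives the lower bound. The only difference is cosmetic: you get the row bound from the one-variable Schwarz lemma applied to the slices $\zeta\mapsto\psi_i(\zeta v)$, whereas the paper states the same estimate as the distance-decreasing property of the Kobayashi metric, $\max_i|(\psi'(0)v)_i|\le|v|$.
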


\begin{proof}
By definition,
\begin{align*}
    k_{\mathbb{D}^n}(0)
    =
    \inf
    \left\{
    |\det \psi'(0)|^{-2}
    :
    \psi \in \mathcal{O}(\mathbb{B}^n,\mathbb{D}^n),
    \ \psi(0)=0
    \right\}.
\end{align*}

Since the inclusion map $i:\mathbb{B}^n \hookrightarrow \mathbb{D}^n$ belongs to $\mathcal{O}(\mathbb{B}^n,\mathbb{D}^n)$, we immediately obtain
\begin{align*}
    k_{\mathbb{D}^n}(0)\leq 1.
\end{align*}

Thus, it suffices to prove that
\begin{align*}
    |\det \psi'(0)|\leq 1
\end{align*}
for every holomorphic map $\psi \in \mathcal{O}(\mathbb{B}^n, \mathbb{D}^n)$ satisfying $\psi(0)=0$.

For $\psi \in \mathcal{O}(\mathbb{B}^n, \mathbb{D}^n)$ with $\psi(0) = 0$, we have 
\[
\psi'(0)
=
\left(
\frac{\partial \psi_i}{\partial z_j}(0)
\right)_{1 \leq i,j \leq n}
\]
the complex Jacobian matrix of $\psi$ at the origin. By the distance-decreasing property of the Kobayashi metric,
\begin{align}\label{kob relat}
    F_{\mathbb{D}^n}^{K}\big(0,\psi'(0)v\big)
    \leq
    F_{\mathbb{B}^n}^{K}(0,v),
    \quad \text{for any}\; v \in \mathbb{C}^n.
\end{align}

Since $F_{\mathbb{D}^n}^{K}(0,w)
=
\max_{1 \leq i \leq n}|w_i|$,
and
$F_{\mathbb{B}^n}^{K}(0,v)=|v|,$ it follows from \eqref{kob relat} that
\begin{align}\label{ineq for jacob}
    \max
    \bigg\{
    \big|\langle \psi'(0)v,e_1\rangle \big|,
    \dots,
    \big|\langle \psi'(0)v,e_n\rangle \big|
    \bigg\}
    \leq |v|,
\end{align}
for any $v \in \mathbb{C}^n$.

Fix $i \in \{1,\dots,n\}$ and choose
\[
v=
\bigg(
\overline{
\frac{\partial \psi_i}{\partial z_1}(0)},
\dots,
\overline{
\frac{\partial \psi_i}{\partial z_n}(0)}
\bigg).
\]
Substituting this vector in \eqref{ineq for jacob}, we obtain
\begin{align*}
    \sum_{j=1}^{n}
    \left|
    \frac{\partial \psi_i}{\partial z_j}(0)
    \right|^2
    \leq
    \left(
    \sum_{j=1}^{n}
    \left|
    \frac{\partial \psi_i}{\partial z_j}(0)
    \right|^2
    \right)^{1/2}.
\end{align*}
Hence,
\begin{align}
    \sum_{j=1}^{n}
    \left|
    \frac{\partial \psi_i}{\partial z_j}(0)
    \right|^2
    \leq 1.
\end{align}

Applying Hadamard's inequality (Theorem~\ref{Hadamard's ineq}) yields
\begin{align*}
    |\det \psi'(0)|\leq 1.
\end{align*}
Therefore,
\begin{align*}
    k_{\mathbb{D}^n}(0)=1.
\end{align*}
\end{proof}

\begin{cor}
The quotient invariant on $\mathbb{D}^n$ satisfies
\begin{align*}
    q_{\mathbb{D}^n}(0)=n^{-n}.
\end{align*}
\end{cor}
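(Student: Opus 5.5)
Since $q_{\mathbb{D}^n}(0)=c_{\mathbb{D}^n}(0)/k_{\mathbb{D}^n}(0)$ and Lemma~\ref{vol of polyballs} gives $k_{\mathbb{D}^n}(0)=1$, the corollary amounts to the identity $c_{\mathbb{D}^n}(0)=n^{-n}$. I would prove this by matching an explicit lower bound with an upper bound obtained from the Carath\'eodory metric together with an AM--GM refinement of the Hadamard step.

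For the lower bound, take the map $\psi(z)=z/\sqrt{n}$. If $z\in\mathbb{D}^n$, then $|z|^2<n$, so $\psi$ maps $\mathbb{D}^n$ into $\mathbb{B}^n$ and satisfies $\psi(0)=0$. Its Jacobian determinant is $n^{-n/2}$, so $|\det\psi'(0)|^2=n^{-n}$, which shows $c_{\mathbb{D}^n}(0)\ge n^{-n}$.

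For the upper bound, let $\psi\in\mathcal{O}(\mathbb{D}^n,\mathbb{B}^n)$ with $\psi(0)=0$, and set $A=\psi'(0)$. The Schwarz lemma for maps into the ball (equivalently, contraction of the Carath\'eodory metric, which at the origin coincides with the Kobayashi metric for both domains) gives
\[
|Av|\le F^{K}_{\mathbb{D}^n}(0,v)=\max_i|v_i| \quad \text{for all } v\in\mathbb{C}^n .
\]
Write $a_j=Ae_j$ for the columns of $A$. For every choice of unimodular scalars $\theta_j$, the vector $v=(\theta_1,\dots,\theta_n)$ has sup-norm $1$, so
\[
\Big|\sum_j\theta_j a_j\Big|^2\le 1 .
\]
Averaging over independent uniformly distributed $\theta_j$ on the unit circle kills the cross terms, which yields $\sum_j|a_j|^2\le 1$, that is, $\|A\|_{HS}^2\le 1$. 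Now apply Theorem~\ref{Hadamard's ineq} to the columns (equivalently to $A^{T}$), and then AM--GM:
\[
|\det A|^2\le\prod_j|a_j|^2\le\Big(\frac{1}{n}\sum_j|a_j|^2\Big)^n\le n^{-n}.
\]
Hence $c_{\mathbb{D}^n}(0)\le n^{-n}$. Combined with the lower bound and $k_{\mathbb{D}^n}(0)=1$, this gives $q_{\mathbb{D}^n}(0)=n^{-n}$.

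The main obstacle is obtaining the Hilbert--Schmidt bound $\|A\|_{HS}\le 1$. Hadamard's inequality alone, applied with only the row bounds used in Lemma~\ref{vol of polyballs}, gives just $|\det A|\le 1$, which is too weak. The sharp constant $n^{-n}$ comes precisely from the averaging over unimodular vectors combined with AM--GM.
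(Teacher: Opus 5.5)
Your proof is correct, but it takes a different route from the paper. The paper does not prove $c_{\mathbb{D}^n}(0)=n^{-n}$. It quotes this value from Carath\'eodory's 1932 paper and combines it with Lemma~\ref{vol of polyballs}, so for the paper the corollary is a one-line consequence of a classical result.

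You instead reprove Carath\'eodory's value with the same tools the paper uses for Lemma~\ref{vol of polyballs}:
\begin{enumerate}
\item The map $z\mapsto z/\sqrt{n}$ gives the lower bound $c_{\mathbb{D}^n}(0)\ge n^{-n}$.
\item For the upper bound, the contraction inequality $|Av|\le\max_i|v_i|$ is the dual of the row condition in Lemma~\ref{vol of polyballs}, since the map now goes from $\mathbb{D}^n$ into $\mathbb{B}^n$ rather than the reverse.
\item Averaging this inequality over $v\in(\partial\mathbb{D})^n$ gives $\sum_j|Ae_j|^2\le 1$.
\item The column form of Hadamard's inequality and AM--GM then give $|\det A|^2\le n^{-n}$.
\end{enumerate}

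Each step checks out. The cross terms vanish because the average of $\theta_j\bar\theta_k$ is $\delta_{jk}$. The column version of Theorem~\ref{Hadamard's ineq} follows from $\det A^{T}=\det A$. Your side remark is also right: row bounds of the kind used in Lemma~\ref{vol of polyballs} only give $|\det A|\le 1$, so the Hilbert--Schmidt bound combined with AM--GM is exactly what produces the sharp constant.

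Your route gives a self-contained proof within the paper's own framework and shows the extremal map explicitly. The paper's route is shorter but relies on an external reference for the key value.
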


\begin{proof}
    Carath\'eodory \cite{Car} proved that $c_{\mbb D^n}(0)=n^{-n}$. Hence the result now follows immediately from Lemma~\ref{vol of polyballs}.
\end{proof}

Using the above corollary, the biholomorphic invariance of $q$, and the fact that the polydisc is a homogeneous domain, one can compute $q_{\mbb D^n}(z)$ for all other points $z\in \mbb D^n$.

\begin{cor}\label{k for poly ball}
The Kobayashi–Eisenman volume element on $\mathbb{D}\times \mathbb{B}^n$ satisfies
\begin{align}
    k_{\mathbb{D}\times \mathbb{B}^n}(0)=1.
\end{align}
\end{cor}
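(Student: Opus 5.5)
We mimic the proof of Lemma~\ref{vol of polyballs}, now on $\mathbb{D}\times\mathbb{B}^n\subset\mathbb{C}^{n+1}$. The inclusion $\mathbb{B}^{n+1}\hookrightarrow \mathbb{D}\times\mathbb{B}^n$ is admissible in the definition of $k_{\mathbb{D}\times\mathbb{B}^n}(0)$ and has Jacobian determinant $1$. Hence $k_{\mathbb{D}\times\mathbb{B}^n}(0)\le 1$, and it remains to show that $|\det\psi'(0)|\le 1$ for every $\psi=(\psi_1,\psi'')\in\mathcal{O}(\mathbb{B}^{n+1},\mathbb{D}\times\mathbb{B}^n)$ with $\psi(0)=0$. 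Here $\psi''=(\psi_2,\dots,\psi_{n+1})$.

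Write $A=\psi'(0)$, let $a$ be its first row, and let $B$ be the $n\times(n+1)$ block formed by the remaining rows. The component $\psi_1:\mathbb{B}^{n+1}\to\mathbb{D}$ fixes the origin, so it contracts Kobayashi metrics. Evaluating at $v=\bar a$, exactly as in the proof of Lemma~\ref{vol of polyballs}, gives $|a|\le 1$. The map $\psi'':\mathbb{B}^{n+1}\to\mathbb{B}^n$ also fixes the origin, and the Kobayashi metric of a ball at its centre is the Euclidean norm. Distance decreasing therefore gives $|Bv|\le|v|$ for all $v$, that is, the operator norm satisfies $\|B\|\le 1$.

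We now bound the determinant. Let $P$ be the orthogonal projection of $\mathbb{C}^{n+1}$ onto the orthogonal complement of the row span of $B$, which has dimension at least $1$. If the rows of $B$ are dependent, then $\det A=0$ and there is nothing to prove. Otherwise, the generalized Hadamard (Gram determinant) formula gives
\[
|\det A|^2=\det(BB^*)\,|Pa^{T}|^2\le \det(BB^*)\,|a|^2 .
\]
Since $\|B\|\le1$, every eigenvalue of $BB^*$ lies in $[0,1]$, so $\det(BB^*)\le 1$. Together with $|a|\le 1$ this yields $|\det A|\le1$, and consequently $k_{\mathbb{D}\times\mathbb{B}^n}(0)=1$.

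The main obstacle is this last step. Theorem~\ref{Hadamard's ineq} alone only bounds each row of $B$ by $1$, and the resulting product bound $1$ is still fine, since each row of $B$ individually has norm at most $\|B\|\le1$. So one can in fact apply Theorem~\ref{Hadamard's ineq} verbatim: each row of $A$ has Euclidean norm at most $1$, the first by the disc estimate and the others because $|e_i^{T}B|\le\|B\|\le 1$. This avoids the Gram formula entirely, and it is the route we will take.
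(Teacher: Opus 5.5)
Your proof is correct, but it takes a more computational route than the paper does. The paper never handles a Jacobian here. It uses the chain $\mathbb{B}^{n+1}\subset\mathbb{D}\times\mathbb{B}^n\subset\mathbb{D}^{n+1}$ and monotonicity of $k$ under inclusion. The lower bound is then just $k_{\mathbb{D}\times\mathbb{B}^n}(0)\ge k_{\mathbb{D}^{n+1}}(0)=1$ from Lemma~\ref{vol of polyballs}. The upper bound comes from the ball, and only the trivial inequality $k_{\mathbb{B}^{n+1}}(0)\le 1$ is needed there, which is exactly your inclusion-map step.

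You instead rerun the Schwarz-lemma-plus-Hadamard computation with the finer target $\mathbb{D}\times\mathbb{B}^n$, obtaining $|a|\le 1$ and $\|B\|\le 1$, hence all row norms at most $1$. This is self-contained and gives operator-norm control of the $\mathbb{B}^n$ block, but that extra information is never used. Any $\psi\in\mathcal{O}(\mathbb{B}^{n+1},\mathbb{D}\times\mathbb{B}^n)$ is already a map into $\mathbb{D}^{n+1}$, so Lemma~\ref{vol of polyballs} gives $|\det\psi'(0)|\le 1$ with no new work; the paper's one-line monotonicity argument exploits precisely this. The Gram-determinant detour in your third paragraph is likewise unnecessary, as you note yourself; the direct application of Theorem~\ref{Hadamard's ineq} suffices.
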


\begin{proof}
Since
\[
\mathbb{B}^{n+1}
\subset
\mathbb{D}\times \mathbb{B}^n
\subset
\mathbb{D}^{n+1},
\]
the monotonicity of the Kobayashi–Eisenman volume element implies
\begin{align*}
    k_{\mathbb{B}^{n+1}}(0)
    \geq
    k_{\mathbb{D}\times \mathbb{B}^n}(0)
    \geq
    k_{\mathbb{D}^{n+1}}(0).
\end{align*}
By Lemma~\ref{vol of polyballs}, $k_{\mathbb{D}^{n+1}}(0)=1$,
and using the Schwarz lemma, it is easy to see that $k_{\mathbb{B}^{n+1}}(0)=1$. Therefore,
\[
k_{\mathbb{D}\times \mathbb{B}^n}(0)=1.
\]
\end{proof}
We now define local plurisubharmonic peak and antipeak functions.

\begin{defn}
Let $D \subset \mathbb{C}^n$ be a domain and let $p \in bD$. A function $\phi$ is called a \emph{local plurisubharmonic peak function} at $p$ if there exists a neighbourhood $U$ of $p$ such that:
\begin{enumerate}
    \item $\phi$ is plurisubharmonic on $D \cap U$,
    \item $\phi$ extends continuously to $\overline{D}\cap U$, $\phi(p) = 0$, and
    \item $\phi(z)<0,$ for $z \in \overline{D}\cap U\setminus\{p\}.$
\end{enumerate}

If $U$ is a neighbourhood of $\overline{D}$, then $\phi$ is called a \emph{global plurisubharmonic peak function} at $p$.
\end{defn}
\begin{exmp}
    Let $D \subset \mathbb{C}^n$ be a domain and let $p \in bD$. Let $f$ be a local holomorphic peak point, i.e. there exists a neighbourhood $U$ of $p$ such that $f$ is holomorphic on $D \cap U$, extends continuously to $\overline{D} \cap U$, $f(p) = 1$, and $|f(z)| < 1$, for $z \in \overline{D} \cap U\setminus \{p\}$. Then, $\phi = \operatorname{log}|f|$ is a local plurisubharmonic peak function at $p \in bD$.
\end{exmp}

\begin{defn}
Let $D \subset \mathbb{C}^n$ be a domain and let $p \in bD$. A function $\psi$ is called a \emph{local plurisubharmonic antipeak function} at $p$ if there exists a neighbourhood $U$ of $p$ such that:
\begin{enumerate}
    \item $\psi$ is plurisubharmonic on $D \cap U$,
    \item $\psi$ extends continuously to $\overline{D}\cap U$, $\psi(p) = -\infty$, and
    \item $\psi(z)>-\infty,$ for $z \in (\overline{D}\cap U)\setminus\{p\}$.
\end{enumerate}

If $U$ is a neighbourhood of $\overline{D}$, then $\psi$ is called a \emph{global plurisubharmonic antipeak function} at $p$.
\end{defn}
\begin{exmp}
    Let $D \subset \mathbb{C}^n$ be a domain and let $p \in bD$. Define 
    \[\psi(z) = \operatorname{log}|z - p|.\]
    Then it is easy to verify that $\psi$ is a local plurisubharmonic antipeak function at $p$. Hence, every domain in $\mathbb{C}^n$ admits a local plurisubharmonic antipeak function at each boundary point of the domain.
\end{exmp}

The following lemma guarantees the existence of global plurisubharmonic peak and antipeak functions provided local ones exist simultaneously.

\begin{lem}[{\cite[Proof of Lemma 2.1.1]{Gaussier}}]\label{ex. neg peak and antipeak}
Let $D$ be a domain in $\mathbb{C}^n$ and let $p \in bD$. Assume that there exist local plurisubharmonic peak and antipeak functions $\phi$ and $\psi$ at $p$. Then, there exist global negative plurisubharmonic
peak and antipeak functions $\widetilde{\phi}$ and $\widetilde{\psi}$ at $p$ such that
\[
{\widetilde{\psi}}^{-1}(-\infty)=\{p\}.
\]
\end{lem}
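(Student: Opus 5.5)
The plan is to build $\widetilde{\phi}$ first by a standard max-gluing, and then to build $\widetilde{\psi}$ from $\psi$ by a gluing in which $\widetilde{\phi}$ does the work that, for bounded $D$, a constant or $\log|z-p|$ would do. Throughout, $U$ is a common neighbourhood of $p$ on which both local functions are defined. We fix radii $0<r_1<r_2$ with $\overline{\mathbb{B}^n(p,r_2)}\subset U$, and write $A:=\overline{D}\cap\{r_1\le |z-p|\le r_2\}$.

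\textbf{Step 1 (global peak function).} The set $A$ is closed and bounded, hence compact. It avoids $p$, and $\phi<0$ is continuous on it, so $\phi\le -\delta$ on $A$ for some $\delta>0$. Define $\widetilde{\phi}=\max(\phi,-\delta/2)$ on $D\cap\mathbb{B}^n(p,r_2)$ and $\widetilde{\phi}\equiv-\delta/2$ on $D\setminus\mathbb{B}^n(p,r_2)$. Near $A$ the two definitions agree, so $\widetilde{\phi}$ is plurisubharmonic on $D$, continuous on $\overline{D}$, and $\widetilde{\phi}\le 0$. Moreover $\widetilde{\phi}(p)=0$, and $\widetilde{\phi}<0$ on $\overline{D}\setminus\{p\}$: inside the ball this is inherited from $\phi$, and outside it is the constant $-\delta/2$. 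The constant extension also makes sense on a neighbourhood of $\overline{D}$, so $\widetilde{\phi}$ is global. This step uses no boundedness of $D$.

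\textbf{Step 2 (local normalization of $\psi$).} On the compact set $\overline{D}\cap\overline{\mathbb{B}^n(p,r_2)}$, $\psi$ is continuous with values in $[-\infty,\infty)$, so $\psi\le M$ there. On $A$, $\psi$ is finite and continuous, so $\psi\ge -L$ there. Set $u=\psi-M-1$. Then $u\le-1$ on the ball, $u^{-1}(-\infty)\cap\overline{\mathbb{B}^n(p,r_2)}=\{p\}$, and $u$ is bounded on $A$.

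\textbf{Step 3 (gluing $u$ to a global function; the main obstacle).} This is the step I expect to be hard. Gluing with a constant, as in Step 1, cannot work: $\max(u,c)$ destroys the pole at $p$. The outer piece must therefore be a negative plurisubharmonic function on all of $D$ with $-\infty$ exactly at $p$. For unbounded $D$ this cannot be $\log|z-p|$, and by Liouville it cannot come from $\mathbb{C}^n$ alone. So the outer piece has to be produced from the geometry of $D$, i.e.\ from $\widetilde{\phi}$.

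What I would try first is to perturb $u$ by the peak function so that it becomes relatively large near $p$ and small near $bD\cap A$. Concretely, set $v=u+\lambda\widetilde{\phi}$ on $D\cap\mathbb{B}^n(p,r_2)$ for a large $\lambda$. This is a sum of plurisubharmonic functions, it is $-\infty$ only at $p$, and $v\le -1-\lambda\delta/2$ on $A$. I would then glue $v$ in the transition region to a global piece of the form $\chi\circ\widetilde{\phi}$. Here $\chi$ is convex and increasing, so $\chi\circ\widetilde{\phi}$ is plurisubharmonic and negative on $D$. To get the pole, $\chi$ would be chosen with $\chi(t)\to-\infty$ as $t\to 0^-$ along the sublevel structure near $p$, and the pole of $v$ would be matched inside $\{\widetilde{\phi}>-\delta/4\}\subset\mathbb{B}^n(p,r_1)$.

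If no such $\chi$ is compatible with convexity (and a convex increasing $\chi$ does push values up near $p$, which is the wrong direction), the fallback is different. I would define $\widetilde{\psi}$ as the upper regularized supremum of all negative plurisubharmonic functions $w$ on $D$ with $w\le u+O(1)$ near $p$, which is a pluricomplex-Green-type envelope. I would then show, using $v$ as a local competitor and $\lambda\widetilde{\phi}$ as a global barrier, that this envelope has a pole at $p$ and is finite elsewhere.

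\textbf{Step 4 (verification).} I would check that $\widetilde{\psi}$ is negative and plurisubharmonic on $D$, continuous on $\overline{D}$ with values in $[-\infty,0)$, and satisfies $\widetilde{\psi}^{-1}(-\infty)=\{p\}$. Away from the ball this follows from finiteness of the global piece; inside the ball it follows from $u^{-1}(-\infty)=\{p\}$ together with the finiteness of $\widetilde{\phi}$.
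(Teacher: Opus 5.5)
Steps 1 and 2 are fine, and the function $v=u+\lambda\widetilde{\phi}$ you introduce in Step 3 is the right object. However, Step 3 is never completed, so the proposal has a genuine gap. The gap comes from your claim that the outer piece must be a negative plurisubharmonic function on all of $D$ with $-\infty$ exactly at $p$. That claim is false. In a three-region gluing the inner function is used \emph{alone} on a small neighbourhood of $p$, and the outer function only has to be plurisubharmonic away from that neighbourhood, so it needs no pole at all. Once you impose the false requirement, both of your routes fail:
\begin{itemize}
\item A convex increasing $\chi$ with $\chi(t)\to-\infty$ as $t\to 0^-$ does not exist, since monotonicity gives $\chi(t)\ge\chi(t_0)$ for $t\ge t_0$. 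You half-suspected this yourself.
\item The envelope fallback is circular. Its admissible class is nonempty only if a \emph{global} negative plurisubharmonic function on $D$ with a pole at $p$ at least as strong as $u$ already exists, and that is precisely what is to be constructed. Your $v$ lives only on $D\cap\mathbb{B}^n(p,r_2)$, so it is not a competitor. Moreover, finiteness off $p$ and continuity up to $\overline{D}$ of such an envelope would still need proof.
\end{itemize}

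What is missing is simply to glue your $v$ to a \emph{constant} across an annulus. Choose $\rho\in(0,r_1)$ with $\phi>-\delta/4$ on $\overline{D}\cap\overline{\mathbb{B}^n(p,\rho)}$. Put $m:=\min\{u(z): z\in\overline{D},\ |z-p|=\rho\}$; this is finite because $u$ is real-valued and continuous there. On this sphere $\widetilde\phi\ge\phi>-\delta/4$, so $v>m-\lambda\delta/4$. On $A$ you have $\widetilde\phi=-\delta/2$, so $v\le -1-\lambda\delta/2$. Fix $\lambda>4|m|/\delta$ and a constant $c$ with $-1-\lambda\delta/2<-c<m-\lambda\delta/4$ (so $c>1$), and set
\[
\widetilde{\psi}=\begin{cases} v & \text{on } D\cap\mathbb{B}^n(p,\rho),\\ \max\{v,-c\} & \text{on } D\cap\big(\mathbb{B}^n(p,r_2)\setminus\mathbb{B}^n(p,\rho)\big),\\ -c & \text{on } D\setminus\mathbb{B}^n(p,r_2). \end{cases}
\]
Since $v$ is continuous and finite on $\big(\overline{D}\cap\overline{\mathbb{B}^n(p,r_2)}\big)\setminus\{p\}$, we get $\max\{v,-c\}=v$ near the sphere of radius $\rho$ and $\max\{v,-c\}=-c$ on $D\cap A$, hence near the sphere of radius $r_2$. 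Therefore $\widetilde\psi$ is plurisubharmonic on $D$, satisfies $\widetilde\psi\le -1$, is continuous on $\overline{D}$, and equals $-\infty$ only at $p$. It is even bounded below by $-c$ outside $\mathbb{B}^n(p,\rho)$, which is what the proof of Lemma~\ref{bdd psh fun} actually uses. The term $\lambda\widetilde\phi$ is exactly what separates the values of $v$ on the small sphere from its values on $A$; $u$ alone need not have this separation. The paper itself gives no argument beyond the citation to Gaussier, so this gluing is the step your write-up needs.
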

The main tool used to prove the strong localization is the \emph{pluricomplex Green function}.
The pluricomplex Green function of $D$ with pole at $z \in D$ is defined by
\begin{align*}
    g_D(z,w)
    =
    \sup
    \left\{
    u(w):
    \begin{array}{l}
    u \text{ is plurisubharmonic on } D,\\
    u<0,\\
    u(\cdot)
    <
    \log|\cdot-z|+\mathcal{O}_z(1)
    \end{array}
    \right\}.
\end{align*}

The pluricomplex Green function is biholomorphically invariant. More precisely, if
\[
F:D_1 \to D_2
\]
is a biholomorphic map, then
\begin{align*}
    g_{D_2}\big(F(z),F(w)\big)
    =
    g_{D_1}(z,w),
    \qquad z,w \in D_1.
\end{align*}
We will use the following Schwarz lemma for log-subharmonic functions in the proof of Theorem \ref{1.1}. A function $u:D \to [0,\infty)$ is said to be \emph{log-subharmonic} on $D$ if $\log u$ is subharmonic on $D$.

\begin{lem}[{\cite[p.~794]{Jarnicki}}]\label{Schwarz}
Let $u$ be a log-subharmonic function on $\mathbb{D}$ such that:
\begin{enumerate}
    \item the function $\lambda \mapsto u(\lambda)/|\lambda|$ is bounded near $0$, and
    \item 
    $\limsup_{|\lambda| \to 1^-} u(\lambda)\leq 1.$
\end{enumerate}
Then,
\begin{align*}
    u(\lambda)\leq |\lambda|
    \quad \text{for all}\;\,\lambda \in \mathbb{D}.
\end{align*}
\end{lem}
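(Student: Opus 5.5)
The statement to be proved is the last one displayed, Lemma~\ref{Schwarz}, the Schwarz lemma for log-subharmonic functions. The plan is to compare $\log u$ with the Green function $\log|\lambda|$ of the disc with pole at $0$, and to conclude by the maximum principle on annuli.

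First we build a candidate subharmonic function. If $u\equiv 0$ there is nothing to prove, so assume otherwise. Set $v(\lambda)=\log u(\lambda)-\log|\lambda|$ on $\mathbb{D}\setminus\{0\}$. Since $\log|\lambda|$ is harmonic there, $v$ is subharmonic on the punctured disc (with values in $[-\infty,\infty)$).

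Next we remove the singularity at $0$. Hypothesis (1) gives a constant $M$ with $v\le \log M$ near $0$, so $v$ is bounded above in a punctured neighbourhood of $0$. A subharmonic function on a punctured disc that is bounded above near the puncture extends to a subharmonic function on the whole disc: one sets $v(0)=\limsup_{\lambda\to0}v(\lambda)$, and the standard removable singularity theorem applies (the polar set $\{0\}$ is removable for bounded-above subharmonic functions). A concrete alternative is to work with $v+\epsilon\log|\lambda|$, which tends to $-\infty$ at $0$, and let $\epsilon\to0$ at the end.

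Then we apply the maximum principle. Fix $r<1$ and $\epsilon>0$. On the annulus $\{\delta<|\lambda|<r\}$, consider $w_\epsilon=v+\epsilon\log|\lambda|$. As $\delta\to 0$, the boundary values on $|\lambda|=\delta$ tend to $-\infty$. On $|\lambda|=r$,
\[
w_\epsilon\le \sup_{|\lambda|=r}\log u-(1-\epsilon)\log r .
\]
By the maximum principle,
\[
v(\lambda)+\epsilon\log|\lambda|\le \sup_{|\zeta|=r}\log u(\zeta)-(1-\epsilon)\log r
\]
for $0<|\lambda|<r$. Let $r\to1^-$ and use hypothesis (2), $\limsup_{|\zeta|\to1^-}u\le1$, so the right side tends to $\le 0$. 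Then let $\epsilon\to0$ to get $v\le0$, i.e.\ $u(\lambda)\le|\lambda|$. At $\lambda=0$ the inequality $u(0)\le0$ follows from hypothesis (1), since $u(\lambda)\le M|\lambda|\to0$ and $u$ is upper semicontinuous.

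The only delicate point is the behaviour at the pole. We handle it through the $\epsilon\log|\lambda|$ perturbation, which avoids invoking the removable singularity theorem, and the boundedness from hypothesis (1) is exactly what makes $w_\epsilon\to-\infty$ at $0$.
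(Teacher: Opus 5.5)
The paper gives no proof of this lemma; it is quoted from Jarnicki--Pflug. Your argument on the punctured disc is the standard one and is correct. The function $v=\log u-\log|\lambda|$ is subharmonic on $\mathbb{D}\setminus\{0\}$, and hypothesis (1) bounds it above near $0$. The perturbation $\epsilon\log|\lambda|$ then makes the inner boundary values uniformly at most $\log M+\epsilon\log\delta\to-\infty$. Applying the maximum principle on $\{\delta<|\lambda|<r\}$ and letting $\delta\to0$, $r\to1^-$, $\epsilon\to0$ gives $u(\lambda)\le|\lambda|$ for $0<|\lambda|<1$. This is a valid substitute for the removable-singularity theorem.

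The step that fails as written is the one at $\lambda=0$. Upper semicontinuity of $u$ gives $\limsup_{\lambda\to0}u(\lambda)\le u(0)$, which is a lower bound for $u(0)$, not an upper bound. Hypothesis (1) also says nothing about $u(0)$ itself, since $u(\lambda)/|\lambda|$ is only defined for $\lambda\neq0$. What you need instead is the sub-mean value property of the subharmonic function $\log u$ at the origin. Take $0<\delta<\rho$, where $u(\lambda)\le M|\lambda|$ on $0<|\lambda|<\rho$; then
\[
\log u(0)\le\frac{1}{2\pi}\int_0^{2\pi}\log u(\delta e^{i\theta})\,d\theta\le\log(M\delta).
\]
Letting $\delta\to0$ gives $\log u(0)=-\infty$, i.e.\ $u(0)=0$. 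Equivalently, every subharmonic function satisfies $s(a)=\limsup_{z\to a,\,z\neq a}s(z)$. With this replacement your proof is complete.
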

\section{Strong Localization of the Kobayashi–Eisenman volume element}\label{Section 3}
In this section, we first establish the localization result for the Kobayashi–Eisenman volume element in terms of the Lempert function; see Proposition \ref{loc of kob. vol.}. We then recall a construction due to Nikolov and Fornæss \cite{Nikolai} of a bounded plurisubharmonic function in Lemma~\ref{bdd psh fun}. Using this bounded plurisubharmonic function, we derive a lower bound for the pluricomplex Green function in terms of a local plurisubharmonic peak function in Lemmas~\ref{4.2} and~\ref{4.3}. Finally, combining Proposition \ref{loc of kob. vol.}, Lemma \ref{4.2}, and Lemma \ref{4.3}, we prove a quantitative version of strong localization for the Kobayashi–Eisenman volume element near a plurisubharmonic peak point.

Let $D \subset \mathbb{C}^n$ be a domain. Recall the Lempert function 
\begin{align}
    \ell_D(z, w) &= \inf\{|r|: \exists f \in \mathcal{O}(\mathbb{D}, D), f(0) = z, f(r) = w\}.
\end{align}
We define the following auxiliary Lempert function
\begin{align}
    \tilde{\ell}_D(z, w) &= \inf\{|\alpha|: \exists \psi \in \mathcal{O}(\mathbb{B}^n, D), \psi(0) = z, \psi(\alpha) = w\}.
\end{align}
We will use the auxiliary Lempert function in the proof of Proposition \ref{loc of kob. vol.}. The following lemma shows that the Lempert function and the auxiliary Lempert function coincide.
\begin{lem}
    Let $D \subset \mathbb{C}^n$ be a domain. Then,
    \begin{align*}
        \tilde{\ell}_D(z, w) = \ell_D(z, w)
    \end{align*}
    for $z, w \in D$.
\end{lem}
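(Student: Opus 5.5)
We prove the two inequalities $\tilde{\ell}_D\le \ell_D$ and $\ell_D\le\tilde{\ell}_D$ separately. In each direction we convert an admissible map for one infimum into an admissible map for the other with the same (or arbitrarily close) parameter value. We may assume $z\neq w$, since both functions vanish trivially when $z=w$ (take constant maps).

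For $\tilde{\ell}_D \le \ell_D$, let $f\in\mathcal{O}(\mathbb{D},D)$ with $f(0)=z$ and $f(r)=w$. We may assume $r\neq 0$. Write $r=|r|e^{i\theta}$ and define
\begin{align*}
\psi(\zeta_1,\dots,\zeta_n)=f\big(e^{i\theta}\zeta_1\big), \qquad \zeta\in\mathbb{B}^n .
\end{align*}
Since $|\zeta_1|<1$ on $\mathbb{B}^n$, this is a well-defined holomorphic map $\mathbb{B}^n\to D$. It satisfies $\psi(0)=z$, and with $\alpha=(|r|,0,\dots,0)$ we get $\psi(\alpha)=f(r)=w$ and $|\alpha|=|r|$. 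Taking the infimum over $f$ gives $\tilde{\ell}_D(z,w)\le \ell_D(z,w)$.

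For $\ell_D\le \tilde{\ell}_D$, let $\psi\in\mathcal{O}(\mathbb{B}^n,D)$ with $\psi(0)=z$ and $\psi(\alpha)=w$. Then $\alpha\neq0$ because $z\neq w$. Put $u=\alpha/|\alpha|$, a unit vector, and restrict $\psi$ to the complex line through $0$ and $\alpha$:
\begin{align*}
f(\lambda)=\psi(\lambda u), \qquad \lambda\in\mathbb{D}.
\end{align*}
Since $|\lambda u|=|\lambda|<1$, $f$ is a holomorphic map $\mathbb{D}\to D$. It satisfies $f(0)=z$ and $f(|\alpha|)=\psi(\alpha)=w$, so $\ell_D(z,w)\le|\alpha|$. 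Taking the infimum over admissible $\psi$ gives $\ell_D(z,w)\le\tilde{\ell}_D(z,w)$.

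There is no genuine obstacle here; the argument is entirely elementary. The only points needing care are the degenerate case $\alpha=0$ or $r=0$, handled by the reduction to $z\neq w$ above, and checking that the compositions land in the correct domains. This holds because the unit disc embeds in $\mathbb{B}^n$ through any unit direction, and $\mathbb{B}^n$ projects into $\mathbb{D}$ via the first coordinate.
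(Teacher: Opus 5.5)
Your proof is correct and follows essentially the same approach as the paper: you restrict $\psi$ to the complex line through $\alpha$ and lift a disc via $\zeta\mapsto f(\zeta_1)$, which is what the paper does after first normalizing $\alpha$ to $(r,0,\dots,0)$ by a unitary map. Your explicit direction $u=\alpha/|\alpha|$ and your treatment of the case $z=w$ simply spell out details the paper leaves implicit.
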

\begin{proof}
    Let $z, w \in D$. Using unitary maps, we can have
    \begin{align*}
        \tilde{\ell}_D(z, w) = \inf \{|r|: \exists \psi \in \mathcal{O}(\mathbb{B}^n, D), \psi(0) = z, \psi(r, 0, \dots, 0) = w\}.
    \end{align*}
    For any $\psi \in \mathcal{O}(\mathbb{B}^n, D)$, we have a corresponding $f \in \mathcal{O}(\mathbb{D}, D)$ given by $f(\lambda) = \psi(\lambda, 0, \dots, 0)$; and for any $g \in \mathcal{O}(\mathbb{D}, D)$, we have a related $\Psi \in \mathcal{O}(\mathbb{B}^n, D)$ given by $\Psi(z) = g(z_1)$. Using these corresponding functions in the definitions of $\ell_D$ and $\ti \ell_D$, we easily obtain
    \begin{align*}
        \tilde{\ell}_D(z, w) = \ell_D(z, w).
    \end{align*}
\end{proof}
Since the Kobayashi–Eisenman volume element is monotonically decreasing with respect to domain inclusion, it is natural to seek a reverse-type inequality. The following proposition provides such an inequality with a weight depending on the Lempert function.
\begin{prop}\label{loc of kob. vol.}
    Let $D$ be a domain in $\mathbb{C}^n$ and $D_1 \subset D$ be any non-empty subdomain. Then,
    \begin{align}\label{23}
        k_D(z) \geq \inf_{w \in D \setminus D_1}\big(\ell_{D}(z, w)\big)^{2n}k_{D_1}(z) = \inf_{w \in D \cap bD_1}\big(\ell_{D}(z, w)\big)^{2n}k_{D_1}(z),
    \end{align}
    for $z \in D_1$.
\end{prop}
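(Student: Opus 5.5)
The strategy is the standard rescaling argument for localization of the Kobayashi metric, adapted to volume elements via the auxiliary Lempert function $\tilde\ell_D$. Fix $z\in D_1$ and set
\[
r:=\inf_{w\in D\setminus D_1}\ell_D(z,w)=\inf_{w\in D\setminus D_1}\tilde\ell_D(z,w),
\]
using the lemma above that $\tilde\ell_D=\ell_D$. If $r=0$ there is nothing to prove; note $r\le 1$ always (assuming $D_1\neq D$). Take any competitor $\psi\in\mathcal O(\mathbb B^n,D)$ with $\psi(0)=z$.

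The key step is to show $\psi(r\,\mathbb B^n)\subset D_1$. Indeed, if $\psi(\alpha)=w\in D\setminus D_1$ for some $|\alpha|<r$, then $\tilde\ell_D(z,w)\le|\alpha|<r$, contradicting the definition of $r$. Then $\phi(\zeta):=\psi(r\zeta)$ lies in $\mathcal O(\mathbb B^n,D_1)$, $\phi(0)=z$, and $\det\phi'(0)=r^n\det\psi'(0)$. By the definition of $k_{D_1}$,
\[
k_{D_1}(z)\le r^{-2n}|\det\psi'(0)|^{-2}.
\]
Taking the infimum over $\psi$ gives $r^{2n}k_{D_1}(z)\le k_D(z)$, which is the first inequality.

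For the equality of the two infima, the inclusion $D\cap bD_1\subset D\setminus D_1$ holds because $D_1$ is open, so the infimum over $D\setminus D_1$ is at most the one over $D\cap bD_1$. For the reverse, take $w\in D\setminus D_1$ and a disc $f\in\mathcal O(\mathbb D,D)$ with $f(0)=z$, $f(s)=w$, and $|s|$ close to $\ell_D(z,w)$. The segment $t\mapsto f(ts)$, $t\in[0,1]$, starts in $D_1$ and ends outside it, so it first meets $bD_1$ at some $t_0\in(0,1]$. The point $f(t_0s)\in D\cap bD_1$ then satisfies $\ell_D(z,f(t_0s))\le t_0|s|\le|s|$. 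Letting $|s|\downarrow\ell_D(z,w)$ gives the reverse inequality.

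The main subtlety is the containment step, which requires working with the strict inequality $|\alpha|<r$ rather than $\le$. Working with the open ball $r\mathbb B^n$ handles this cleanly, so I do not expect a genuine obstacle.
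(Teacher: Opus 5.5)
Your proof is correct and is essentially the paper's argument. The paper fixes any $s<\inf_{w\in D\setminus D_1}\tilde\ell_D(z,w)$, shows $\psi(s\mathbb B^n)\subset D_1$, and then lets $s$ increase; this is equivalent to your direct use of the open ball $r\mathbb B^n$. Your first-exit argument along $t\mapsto f(ts)$ also correctly proves the equality of the two infima, which the paper only states as easy to see.
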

\begin{proof}
    Let $z \in D_1$. If $\inf_{w \in D \setminus D_1}\tilde{\ell}_D(z, w) = 0$, then \eqref{23} is always true. 

    Assume $\inf_{w \in D \setminus D_1}\tilde{\ell}_D(z, w) > 0$. Choose $s > 0$ such that
    \begin{align}\label{27a}
        \inf_{w \in D \setminus D_1}\tilde{\ell}_D(z, w) > s.
    \end{align}
    For any $\psi \in \mathcal{O}(\mathbb{B}^n, D)$ with $\psi(0) = z$, using \eqref{27a}, we have
    \begin{align*}
        \psi(w) \in D_1 \quad \text{for all } w \in s\mathbb{B}^n.
    \end{align*}
    Hence
    \begin{align*}
        k_{D_1}(z) \leq \frac{|\operatorname{det}\psi'(0)|^{-2}}{s^{2n}}.
    \end{align*}
    Here $\psi$ and $s$ are arbitrary, therefore
    \begin{align*}
        k_{D}(z) \geq \inf_{w \in D \setminus D_1}\big(\tilde{\ell}_D(z, w)\big)^{2n} k_{D_1}(z).
    \end{align*}
    Since $\tilde{\ell}_D = \ell_D$,
    \begin{align*}
         k_{D}(z) \geq \inf_{w \in D \setminus D_1}\big(\ell_{D}(z, w)\big)^{2n} k_{D_1}(z),
    \end{align*}
    for $z \in D_1$. Also, it is easy to see that
    \begin{align*}
        \inf_{w \in D\setminus D_1}\ell_{D}(z, w) = \inf_{w \in D \cap bD_1} \ell_D(z, w),
    \end{align*}
    for $z \in D_1$.
\end{proof}
Nikolov and Fornæss \cite{Nikolai} constructed bounded plurisubharmonic functions on domains whose boundary contains a plurisubharmonic peak point. We recall their construction below.
\begin{lem}\label{bdd psh fun}
    Let $D \subset \mathbb{C}^n$ be a domain and let $p \in bD$. Suppose that there exists a local plurisubharmonic peak function $\phi$ at $p \in bD$. Then, there exist a bounded plurisubharmonic function ${\theta}$ on $D$ and $r \in (0, 1)$ such that
    \begin{align}
        \theta(z) = |z - p|^2
    \end{align}
    for $z \in D \cap \mathbb{B}^n(p, r)$.
\end{lem}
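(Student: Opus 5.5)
Lemma~\ref{bdd psh fun} is attributed to Fornæss and Nikolov, and I would reproduce their gluing construction. Shrinking the neighbourhood in the definition of a local plurisubharmonic peak function, I may assume $\phi$ is plurisubharmonic on $D\cap\mathbb{B}^n(p,R)$, continuous on $\overline{D}\cap\overline{\mathbb{B}^n(p,R)}$, and negative off $p$, for some $R>0$. The idea is to glue $|z-p|^2$ near $p$ to a constant away from $p$, using $\phi$ as a barrier in an annulus, by a maximum construction.

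\textbf{Step 1: the barrier.} Fix radii $0<r<r_1<R$. The set $K=\overline{D}\cap\{r_1\le |z-p|\le R'\}$, with some $r_1<R'<R$, is compact, and $\phi<0$ is continuous on it. Hence $\phi\le -\delta$ on $K$ for some $\delta>0$.

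\textbf{Step 2: the glued function.} Take constants $A>0$ and $c$, and define
\[
u(z)=\max\big\{|z-p|^2,\ A\phi(z)+c\big\}\quad\text{on } D\cap\mathbb{B}^n(p,R'), \qquad u(z)=c \quad\text{on } D\setminus\mathbb{B}^n(p,r_1).
\]
For these to define one plurisubharmonic function $\theta$, I need two conditions.
\begin{enumerate}
\item[(a)] On the overlap $r_1\le|z-p|<R'$, the max must equal $c$. This needs $A\phi+c\le c$, which holds since $\phi<0$, and $|z-p|^2\le c$.
\item[(b)] On $D\cap\mathbb{B}^n(p,r)$, the max must equal $|z-p|^2$.
\end{enumerate}

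These two conditions pull against each other. The constant branch is not of the form $A\phi+c$, so the real requirement is that the max equal $c$ near $|z-p|=r_1$. That needs $|z-p|^2\le c$ and $A\phi+c\ge c$ there, and the second inequality contradicts $\phi<0$. So I would modify the construction and instead set
\[
\theta=\max\big\{|z-p|^2,\ c+A\phi\big\}\ \text{ with the roles arranged so that } c+A\phi \text{ dominates in the annulus.}
\]
Concretely, I would take $c=R'^2$ and use a barrier of the form $\max\{|z-p|^2,\ c+A\phi\}$ near $p$, glued to the constant $c$ outside. This fails for the same reason: $c+A\phi<c$.

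The correct gluing therefore goes the other way, and the key step is to choose it as
\[
\theta(z)=\max\big\{|z-p|^2-\varepsilon,\ -A\phi(z)\cdot 0\big\}\ldots
\]
Rather than juggle signs, I would use the standard form
\[
\theta(z)=\begin{cases}\max\{|z-p|^2,\ M\phi(z)+C\} & z\in D\cap\mathbb{B}^n(p,R'),\\ C & \text{otherwise,}\end{cases}
\]
and notice that the max equals $C$ exactly where $M\phi+C\ge|z-p|^2$. Gluing to the constant $C$ requires $M\phi+C=C$, which is impossible. The real fix is to glue not to a constant but to $\max\{C-\text{small},\,\cdot\}$.

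I would therefore take $\theta=\max\{|z-p|^2,\ C+M\phi\}$ on $D\cap\mathbb{B}^n(p,R')$ and $\theta=\max\{R'^2/4,\ C+M\phi\}$ elsewhere near, with $C=r_1^2$. Then:
\begin{itemize}
\item For $M$ large, $C+M\phi\le C-M\delta<0$ on the annulus, so in the annulus the max is $|z-p|^2$.
\item This does not match the constant branch, so the exterior constant should be $\sup$-type.
\end{itemize}
The genuinely working choice, and the one I would verify in detail, is
\[
\theta=\max\{|z-p|^2,\ M\phi+2r_1^2\}\ \text{ on } D\cap \mathbb{B}^n(p,r_1),\qquad \theta=M\phi+2r_1^2\ \text{ off it, where } M\phi+2r_1^2 \text{ is extended using a global negative psh peak function } \widetilde\phi.
\]
\textbf{Step 3: choosing $M$ via the global peak function.} This uses Lemma~\ref{ex. neg peak and antipeak} with the antipeak function $\log|z-p|$, which exists at every boundary point. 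Replace $\phi$ by the global $\widetilde\phi$, which is negative and plurisubharmonic on all of $D$ and peaks at $p$. Set
\[
\theta=\max\{|z-p|^2,\ M\widetilde\phi+2r_1^2\}\ \text{ on } D\cap\mathbb{B}^n(p,r_1),\qquad \theta=M\widetilde\phi+2r_1^2 \ \text{ elsewhere}.
\]
Near $|z-p|=r_1$, $\widetilde\phi$ is close to $0$ only near $p$. So I would choose $M$ so that $M\widetilde\phi+2r_1^2> r_1^2\ge|z-p|^2$ near the sphere. This holds if $\widetilde\phi>-r_1^2/M$ there, which is fine for $M$ small rather than large. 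Meanwhile, on $\mathbb{B}^n(p,r)$ I need $|z-p|^2\ge M\widetilde\phi+2r_1^2$. That fails near $p$, since the right side tends to $2r_1^2$.

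\textbf{Main obstacle.} This is the crux. Because $\widetilde\phi$ peaks at $p$, the barrier is largest at $p$, so it cannot be dominated by $|z-p|^2$ near $p$. The barrier must instead be built from $-\widetilde\phi$-type growth, or via the antipeak function. I expect the actual construction to use the antipeak function $\widetilde\psi$ together with $\widetilde\phi$. I would try $\theta=\max\{|z-p|^2,\ \varepsilon\widetilde\psi\ \text{capped}\}$, and resolve the inequalities using compactness and $\widetilde\psi^{-1}(-\infty)=\{p\}$.

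Boundedness of $\theta$ then follows from the bounds on $\widetilde\phi$ and $\widetilde\psi$ away from $p$. Plurisubharmonicity follows from the gluing lemma, since the max agrees with the outer piece on a neighbourhood of the gluing sphere.
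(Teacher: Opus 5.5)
Your proposal does not reach a proof. Each construction you write down is abandoned for a reason you identify yourself. The final suggestion, $\max\{|z-p|^2,\varepsilon\widetilde\psi\}$ ``capped'', is not checked and cannot work as a single global formula: any function of the form $\max\{|z-p|^2,\cdot\}$ on all of $D$ is unbounded when $D$ is unbounded. So $|z-p|^2$ must be switched off outside some ball, which forces a three-piece gluing. Outside the ball the outer function is used \emph{alone}; it must dominate $|z-p|^2$ on the outer gluing sphere and lie below $|z-p|^2$ on the inner one.

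Your diagnosis that the peak function cannot play this role is right, since it is largest at $p$. The missing step is to use an affinely rescaled global negative \emph{antipeak} function instead. With $p=0$ and $\hat\theta$ from Lemma~\ref{ex. neg peak and antipeak}, the paper chooses $r\in(0,1)$ with
\[
\inf_{D\setminus\mathbb{B}^n}\hat\theta\ \ge\ c:=1+\sup_{D\cap\overline{\mathbb{B}^n(0,r)}}\hat\theta ,
\]
which is possible because $\hat\theta\to-\infty$ at $0$. It then sets $\tilde\theta=1+(1-r^2)(\hat\theta-c)$. This gives $\tilde\theta\le r^2$ on $D\cap\overline{\mathbb{B}^n(0,r)}$ and $\tilde\theta\ge 1$ on $D\setminus\mathbb{B}^n$, which are exactly the two gluing inequalities. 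Hence the function equal to $|z|^2$ on $D\cap\mathbb{B}^n(0,r)$, to $\max\{|z|^2,\tilde\theta\}$ on $D\cap(\mathbb{B}^n\setminus\mathbb{B}^n(0,r))$, and to $\tilde\theta$ on $D\setminus\mathbb{B}^n$ is plurisubharmonic. Since $\hat\theta<0$, it satisfies $0\le\theta\le\max\{1,\,1-(1-r^2)c\}$.

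The second gap is in how you intend to obtain the inequalities. ``Compactness and $\widetilde\psi^{-1}(-\infty)=\{p\}$'' do not give $\inf_{D\setminus\mathbb{B}^n(p,1)}\widetilde\psi>-\infty$ when $D$ is unbounded. In that case $\overline{D}\setminus\mathbb{B}^n(p,1)$ is not compact, and a function that is finite and continuous there need not be bounded below. This lower bound is indispensable: it is needed for the choice of $r$ above and for the boundedness of $\theta$, which your last paragraph simply assumes. It is not part of the statement of Lemma~\ref{ex. neg peak and antipeak}. The paper uses it implicitly when choosing $r$, citing the proof rather than the statement of Gaussier's lemma. You would have to extract it from the construction of the global antipeak function (for instance, one that is constant outside a neighbourhood of $p$) or prove it separately.
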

\begin{proof}
    By translation, we may assume that $p = 0$.
    Using Lemma \ref{ex. neg peak and antipeak}, there exists a negative antipeak plurisubharmonic function $\hat{\theta}$ for $D$ at $0$. Then, there exists $r \in (0, 1)$ such that
\begin{align*}
    \inf_{D \setminus \mathbb{B}^n} \hat{\theta} \geq c := 1 + \sup_{D \cap \overline{\mathbb{B}^n(0, r)}} \hat{\theta}.
\end{align*}
If $D \subset \mathbb{B}^n$, then we may define $\inf_{D \setminus \mathbb{B}^n} \hat{\theta} := c$. Set 
\begin{align*}
    \tilde{\theta} = 1 + (1 - r^2) (\hat{\theta} - c),
\end{align*}
then 
\begin{align}
        \theta = \begin{cases} 
      |\cdot|^2,  & D \cap \mathbb{B}^n(0, r),\\
      \operatorname{max}\{|\cdot|^2, \tilde{\theta}\},  & D \cap \big(\mathbb{B}^n \setminus \mathbb{B}^n(0, r)\big),\\
      \tilde{\theta}, &D \setminus \mathbb{B}^n,
      \end{cases}
\end{align}
is a bounded plurisubharmonic function on $D$.
\end{proof}
The following lemma provides a lower bound for the pluricomplex Green function in terms of a local plurisubharmonic peak function. The proof is based on the construction of negative plurisubharmonic functions on the domain $D$ from a given local plurisubharmonic peak function and a bounded plurisubharmonic function on the domain $D$ obtained in Lemma~\ref{bdd psh fun}.
\begin{lem}\label{4.2}
    Let $D \subset \mathbb{C}^n$ be a domain and let $p \in bD$. Suppose that there exists a local plurisubharmonic peak function $\phi$ at $p \in bD$. Then, there exists a neighbourhood $\hat{U}$ of $p$ such that for any neighbourhood $V \subset \subset \hat{U}$ of $p$, there exists a constant $m > 0$ such that
    \begin{align}\label{unbdd lower of green}
        \inf_{w \in D \cap b\hat{U}}g_D(w, z) \geq m \phi(z),
    \end{align}
    for $z \in D \cap V$.
\end{lem}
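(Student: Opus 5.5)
We follow the Fornæss–Nikolov approach: for each pole $w \in D\cap b\hat U$ we build one negative plurisubharmonic competitor in the definition of $g_D(w,\cdot)$. The competitor will have a logarithmic pole at $w$ and be $\geq m\phi$ on $D\cap V$, with $m$ independent of $w$. By translation assume $p=0$. Let $U_0$ be the neighbourhood on which $\phi$ is a local plurisubharmonic peak function. Take $\theta$ and $r$ from Lemma~\ref{bdd psh fun}, and let $M=\sup_D\theta<\infty$. Choose $\hat U=\mathbb{B}^n(0,\rho)$ with $\rho<r$ small enough that $\overline{\hat U}\subset U_0$. Fix $V\subset\subset\hat U$ and radii $0<\rho_1<\rho_2<\rho$ with $V\subset\mathbb{B}^n(0,\rho_1)$.

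First we glue $\phi$ to a global function. Since $\phi<0$ and $\phi$ is continuous on the compact set $\overline D\cap\{\rho_1\le|z|\le\rho_2\}$, it is bounded above there by some $-\delta<0$. Consider
\[
u=\max\{\phi,\ K(\theta-\rho_1^2)-\delta'\}\ \text{on } D\cap \mathbb{B}^n(0,\rho_2), \qquad u=K(\theta-\rho_1^2)-\delta' \ \text{elsewhere on } D,
\]
where $\theta=|z|^2$ near $0$. We choose $K$ and $\delta'$ so that three things hold. Near $|z|=\rho_2$ the second term must dominate, which needs $K(\rho_2^2-\rho_1^2)-\delta'>-\delta$; then the gluing is plurisubharmonic. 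On $|z|\le\rho_1$ we need $u=\phi$. And globally we need $u<0$, but $\theta$ can be as large as $M$, so this fails for large $K$. We therefore instead subtract a constant and rescale: set $v=(u-C)/C'$, which is negative and bounded below on $D\setminus\mathbb{B}^n(0,\rho_2)$.

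A cleaner alternative, following \cite{Nikolai}, is $u=\max\{\phi,\ \varepsilon(\theta-M)-\delta'\}$. This is automatically negative. We take $\varepsilon(\rho_2^2-M)<-\delta$ false... Since the constants are delicate, the intended outcome is simply a negative plurisubharmonic function $\Phi$ on $D$ with $\Phi=\phi$ on $D\cap V$ and $\Phi\le-c_0<0$ on $D\setminus\mathbb{B}^n(0,\rho_2)$.

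Next we add the pole. For $w\in D\cap b\hat U$ we have $|w|=\rho$. Put
\[
v_w(z)=\log\frac{|z-w|}{R},
\]
with $R$ large, to handle $z$ near $w$. This is negative only on a bounded set, so we combine it with $\Phi$:
\[
G_w=\max\{\,A\Phi,\ \log(|z-w|/(\rho-\rho_2))\,\}\ \text{on } D\cap\mathbb{B}^n(w,\rho-\rho_2), \qquad G_w=A\Phi \ \text{otherwise}.
\]
Here we choose $A$ so that $A\Phi\le -Ac_0<\log(\tfrac12)$ on the annulus $\tfrac12(\rho-\rho_2)\le|z-w|\le\rho-\rho_2$, which lies outside $\mathbb{B}^n(0,\rho_2)$. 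Then the log term dominates near the boundary sphere of the small ball, the gluing is plurisubharmonic, and near $w$ the maximum $\log|z-w|+O(1)$ has the right pole. The function is negative, since on the small ball the log term is $\le 0$; to make it strict, replace $\rho-\rho_2$ by a slightly larger constant. Because $V$ is disjoint from $\mathbb{B}^n(w,\rho-\rho_2)$, we get $G_w=A\phi$ on $D\cap V$. Hence $g_D(w,z)\ge A\phi(z)$, and the constant $m=A$ does not depend on $w$.

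The main obstacle is the first step: producing the global negative extension $\Phi$ of $\phi$ that is uniformly negative away from $0$ while the domain may be unbounded. This is where $\theta$ from Lemma~\ref{bdd psh fun} is essential. We try $\Phi=\max\{\phi,\ \eta(\theta-\rho_2^2)-\delta\}$ inside $\mathbb{B}^n(0,\rho_2)$ and $\Phi=\eta(\theta-\rho_2^2)-\delta$ outside it. The small parameter $\eta$ is chosen so that three conditions hold:
\begin{itemize}
\item on $|z|\le\rho_1$ the second term is $\le-\delta+0<\phi$? This is false in general, since $\phi\to 0$ only at $0$.
\item We therefore instead use a three-radius arrangement, arranging that $\phi$ dominates on $\mathbb{B}^n(0,\rho_1)$. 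For this we need $\eta(\rho_1^2-\rho_2^2)-\delta<\inf_{D\cap \mathbb{B}^n(0,\rho_1)}\phi$, which may be $-\infty$; in that case it suffices that $\eta(\rho_1^2-\rho_2^2)-\delta$ be smaller than $\phi$ at each point.
\end{itemize}
To get domination we instead replace $\phi$ by $\max\{\phi,-L\}$ with $L$ large. This is still a peak function, and $m\phi\le m\max\{\phi,-L\}$ keeps the direction of the inequality correct. Finally we take $\eta M$ small and $\delta$ suitable so that $\Phi<0$ everywhere.
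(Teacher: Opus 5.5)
Your overall plan (one negative plurisubharmonic competitor per pole $w$, with a constant independent of $w$) is the right one. However, there is a genuine gap where you insert the pole.

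The function $G_w=\max\{A\Phi,\log(|z-w|/(\rho-\rho_2))\}$ has no logarithmic pole at $w$. Your $\Phi$ is continuous and finite near $w$; for your candidate, $\Phi(w)=\eta(\rho^2-\rho_2^2)-\delta$. So near $w$ the maximum equals $A\Phi$, and the requirement $G_w\le\log|z-w|+O(1)$ fails: a maximum has a pole only if both entries do.

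Moreover, the gluing across $|z-w|=\rho-\rho_2$ goes the wrong way. For ``$\max\{u,v\}$ on $G$, $u$ off $G$'' to be plurisubharmonic, the globally defined $u$ must dominate the locally defined $v$ near $bG\cap D$. You arrange the opposite. As a result $G_w$ jumps from values near $0$ just inside the sphere to values $\le -Ac_0$ just outside, and it is not even upper semicontinuous.

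The usual two-radius repair would put the log term alone on $\mathbb{B}^n(w,r_1)$, the maximum on the annulus, and $A\Phi$ outside $\mathbb{B}^n(w,r_2)$. This needs $\min_{|z-w|=r_2}A\Phi-\max_{|z-w|=r_1}A\Phi\ge\log(r_2/r_1)>0$. Near $w$ your $\Phi$ equals $\eta(|z|^2-\rho_2^2)-\delta$, and $|z|^2$ is not radially increasing about $w\neq 0$, so this fails too.

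Your Step 1 is also unfinished as written: taking $L$ large forces $\eta$ large, while negativity needs $\eta M$ small. It can be repaired with $L=\delta+\eta(\rho_2^2-\rho_1^2)$ and $\eta$ small, but that does not address the pole problem.

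The missing idea is to reverse the roles. The \emph{global} competitor must already carry the pole, and the peak function is inserted by a maximum only on $D\cap V$, away from $w$. This, not the extension of $\phi$, is where $\theta$ is needed. With $\hat U=\mathbb{B}^n(0,s)$ and $s<r$, the paper takes
\[
\psi_w=\tfrac12\,\chi\big(|z-w|^2/(r-s)^2\big)\log|z-w|^2+\hat m\big(\theta-\sup_D\theta-1\big).
\]
The properties that make this work are:
\begin{enumerate}
\item The cut-off term is $\le 0$, has a logarithmic pole at $w$, and has Levi form bounded below by $-\hat m|\xi|^2$ with $\hat m$ independent of $w$.
\item Its support lies in $\mathbb{B}^n(0,r)$, where $\theta=|z|^2$ supplies the compensating positivity. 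Hence $\psi_w$ is negative and plurisubharmonic on all of $D$, bounded or not, with pole at $w$.
\item One then takes $\max\{\psi_w,m\phi\}$ on $D\cap V$ and $\psi_w$ elsewhere. The pole survives because $w\notin\overline V$.
\item Since $\operatorname{dist}(V,b\hat U)>0$, $\psi_w$ is bounded below on $V$ uniformly in $w$, while $\phi\le -c<0$ near $bV\cap\overline D$. So for large $m$ the global function $\psi_w$ dominates near $bV$, which is the correct gluing direction.
\end{enumerate}
This gives $g_D(w,\cdot)\ge m\phi$ on $D\cap V$ with $m$ independent of $w$.
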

\begin{proof}
By translation, we may assume that $p = 0$. By Lemma \ref{bdd psh fun}, there exist a bounded plurisubharmonic function $\theta$ on $D$ and a constant $r \in (0, 1)$ such that $\theta(z) = |z|^2$ on $D \cap \mathbb{B}^n(0, r)$. Choose $s \in (0, r)$ such that the local plurisubharmonic peak function $\phi$ is defined on $D \cap \mathbb{B}^n(0, s)$. Take $\hat{U} = \mathbb{B}^n(0, s)$. Choose a cut-off function $\chi \in C_c^{\infty}(-1, 1)$ such that
\begin{align*}
    \chi = 1 \, \,\text{on} \, \, (-1/2, 1/2) \quad \text{and} \quad 0 \leq \chi \leq 1.
\end{align*}
Let $w \in D \cap b\hat{U}$. For $z \in D$, define
\begin{align}
    f(z) := \chi\left(\frac{|z - w|^2}{(r - s)^2}\right)\log(|z - w|^2).
\end{align}
Then, there exists a constant $\hat{m}(r,s) > 0$ (independent of $w$) such that
\begin{align}
    \sum_{i, j = 1}^{n} \frac{\partial^2 f}{\partial z_i \partial \bar{z}_j}(z)\xi_i \bar{\xi}_j \geq -\hat{m}(r, s)|\xi|^2,
\end{align}
for $z \in D\setminus \{w\}$. Define 
\begin{align}
    \psi(z) := \frac{1}{2}f(z) + \hat{m}(r, s) \big({\theta} - \sup_{D} \theta - 1\big).
\end{align}
Here, $\psi$ is a negative plurisubharmonic function on $D$ with pole at $w$.

For any neighbourhood $V \subset \subset \hat{U}$ of $p$, we may choose large $m> 0$ such that
\begin{align*}
       \widetilde{\phi} =
       \begin{cases} 
       \operatorname{max}\{\psi, m\phi\},  & D \cap V,\\
       \psi, &D \setminus V,
      \end{cases}
\end{align*}
is a negative plurisubharmonic function on $D$. Hence
\begin{align*}
    g_D(w, z) \geq \widetilde{\phi}(z) \geq m\phi(z),
\end{align*}
for $z \in D \cap V$, where $m$ is independent of $w \in D \cap b\hat{U}$. Therefore,
\begin{align}
    \inf_{w \in D \cap b\hat{U}} g_D(w, z) \geq {m}\phi(z),
\end{align}
for $z \in D \cap V$.
\end{proof}
If we assume that the domain $D$ is bounded, then for any neighbourhood $U$ of $p \in bD$, estimate \eqref{unbdd lower of green} holds, as shown in the following lemma.
\begin{lem}\label{4.3}
    Let $D \subset \mathbb{C}^n$ be a bounded domain and let $p \in bD$. Suppose that there exists a local plurisubharmonic peak function $\phi$ at $p \in bD$, defined on a neighbourhood $U_0$ of $p$. Then, for any pair of neighbourhoods $V$ and $U$ of $p$ satisfying $V \subset \subset U \subset U_0$, there exists a constant $m > 0$ such that
    \begin{align}
        \inf_{w \in D \cap bU}g_D(w, z) \geq m \phi(z),
    \end{align}
    for $z \in D \cap V$.
\end{lem}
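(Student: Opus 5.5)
We follow the scheme of the proof of Lemma~\ref{4.2}: for each pole $w \in D \cap bU$ we build an explicit negative plurisubharmonic function on $D$ with a logarithmic pole at $w$. We then glue it to $m\phi$ near $p$ and check that the constants do not depend on $w$. Boundedness of $D$ lets us skip Lemma~\ref{bdd psh fun} and use $|z|^2$ directly as the strictly plurisubharmonic correction. By translation take $p=0$.

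\textbf{Step 1: a negative psh function with pole at $w$.} Since $V\subset\subset U$, set $\delta := \operatorname{dist}(\overline V, bU) > 0$. Choose a cut-off $\chi \in C^\infty_c(-1,1)$ with $\chi=1$ on $(-1/2,1/2)$ and $0\le\chi\le 1$, and put
\[ f_w(z) := \chi\!\left(\frac{|z-w|^2}{\delta^2}\right)\log |z-w|^2. \]
Near $w$, $f_w$ is plurisubharmonic. On the annulus $\delta^2/2 \le |z-w|^2 \le \delta^2$ its Levi form is bounded below by $-\hat m(\delta)|\xi|^2$, with $\hat m(\delta)$ depending only on $\delta$ and $\chi$; this is translation invariance, so it is uniform in $w$. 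Set $R := \sup_{z\in D}|z|^2 < \infty$, which is finite because $D$ is bounded. Define
\[ \psi_w(z) := \tfrac12 f_w(z) + \hat m(\delta)\big(|z|^2 - R - 1\big) - c_0 , \]
where $c_0 \ge 0$ makes $\psi_w<0$ on $D$. Such a $c_0$ exists uniformly, because $f_w \le \log(\operatorname{diam}(D)^2+\delta^2)$ is bounded above independently of $w$. Then $\psi_w$ is plurisubharmonic and negative on $D$, with $\psi_w \le \log|z-w| + O(1)$ near $w$. Moreover, on $D\cap V$ we have $|z-w|\ge\delta$, so $f_w\equiv 0$ there. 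Hence $\psi_w$ is bounded below on $D\cap U$ away from the pole by a constant $-A$, with $A$ independent of $w$.

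\textbf{Step 2: gluing with $m\phi$.} Choose an intermediate neighbourhood $V\subset\subset W\subset\subset U$ with, say, $W$ at distance $\delta/2$ from $bU$. By hypothesis $\phi<0$ on $\overline D\cap U_0\setminus\{0\}$ and $\phi$ is continuous there, so $\phi \le -\eta < 0$ on the compact set $\overline D \cap (\overline W\setminus V)$. Pick $m$ with $m\eta > A$. On $D\cap(W\setminus V)$ we then have $m\phi < -A \le \psi_w$. Therefore
\[ \tilde\phi_w := \begin{cases} \max\{\psi_w, m\phi\}, & D\cap W,\\ \psi_w, & D\setminus W,\end{cases} \]
is plurisubharmonic: it agrees with $\psi_w$ near $D\cap bW$, and it is a maximum of psh functions inside. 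It is negative since $\phi<0$. Because $w\notin \overline W$, the pole at $w$ is untouched, so $\tilde\phi_w$ is a competitor for $g_D(w,\cdot)$. Hence $g_D(w,z)\ge \tilde\phi_w(z)\ge m\phi(z)$ for $z\in D\cap V$. Since $m$ depends only on $A,\eta$, it is independent of $w$, and taking the infimum over $w\in D\cap bU$ gives the claim.

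\textbf{Main obstacle.} The delicate point is the uniformity in $w$ of the lower bound $-A$ for $\psi_w$ on the gluing region. This is exactly where $V\subset\subset U$ (so that $f_w$ vanishes near $\overline W\setminus V$) and the boundedness of $D$ (so that $|z|^2$ and $f_w$ are uniformly bounded and no global antipeak function is needed) enter. If we want $\phi$ only on $U_0$, we should also check that $\overline W \subset U_0$, which holds since $U\subset U_0$.
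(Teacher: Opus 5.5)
Your proposal is correct and takes essentially the paper's route: a cut-off logarithmic pole at $w$ corrected by $\hat m|z|^2$ (boundedness of $D$ replaces Lemma~\ref{bdd psh fun}), then gluing with $m\phi$ with $m$ uniform in $w$; your scaled cut-off, the constant $c_0$ and the collar $W\setminus V$ only make explicit what the paper leaves as ``choose large $m$''. One small slip: $f_w$ vanishes on $V$ but not necessarily on $\overline W\setminus V$, so your parenthetical in the last paragraph is inaccurate. What Step~2 actually uses is $|z-w|\ge\delta/2$ on $W$, which gives $f_w\ge\min\{0,\log(\delta^2/4)\}$ there, and that suffices.
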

\begin{proof}
   By translation, we may assume that $p = 0$. Choose a cut-off function $\chi \in C_c^{\infty}(-1, 1)$ such that
\begin{align*}
    \chi = 1 \, \,\text{on} \, \, (-1/2, 1/2) \quad \text{and} \quad 0 \leq \chi \leq 1.
\end{align*}
Let $w \in D \cap bU$. For $z \in D$, define
\begin{align}
    f(z) := \chi\left({|z - w|^2}\right)\log(|z - w|^2).
\end{align}
Then, there exists a constant $\hat{m} > 0$ (independent of $w$) such that
\begin{align}
    \sum_{i, j = 1}^{n} \frac{\partial^2 f}{\partial z_i \partial \bar{z}_j}(z)\xi_i \bar{\xi}_j \geq -\hat{m}|\xi|^2,
\end{align}
for $z \in D\setminus \{w\}$. Define 
\begin{align}
    \psi(z) := \frac{1}{2}f(z) + \hat{m} \bigg(|z|^2 - \big(\operatorname{diam}(D)\big)^2 - 1\bigg).
\end{align}
Here, $\psi$ is a negative plurisubharmonic function on $D$ with pole at $w$.

For any neighbourhood $V \subset \subset U$ of $p$, we may choose
large ${m} > 0$ such that
\begin{align*}
       \widetilde{\phi} =
       \begin{cases} 
       \operatorname{max}\{\psi,{m}\phi\},  & D \cap V,\\
       \psi, &D \setminus V,
      \end{cases}
\end{align*}
is a negative plurisubharmonic function on $D$. Hence
\begin{align*}
    g_D(w, z) \geq \widetilde{\phi}(z) \geq m\phi(z),
\end{align*}
for $z \in D \cap V$, where $m$ is independent of $w \in D \cap bU$. Therefore,
\begin{align}
    \inf_{w \in D \cap bU} g_D(w, z) \geq m\phi(z),
\end{align}
for $z \in D \cap V$.    
\end{proof}
We now combine the above lemmas to prove Theorem \ref{1.1}.
\begin{proof}[Proof of Theorem \ref{1.1}]
\emph{(i)}
    Using Lemma \ref{4.2}, there exists a neighbourhood $\hat{U} \subset U$ of $p \in bD$ such that for any neighbourhood $V \subset \subset \hat{U}$ of $p$, there exist a constant $m > 0$ such that
    \begin{align}\label{46a}
        \inf_{w \in D \cap b\hat{U}}g_D(w, z) \geq m \phi(z),
    \end{align}
    for $z \in D \cap V$.
    Using Proposition \ref{loc of kob. vol.}, we have
    \begin{align}\label{47a}
    k_{D}(z) \geq \inf_{w \in D \cap b\hat{U}}\big(\ell_D(z, w)\big)^{2n} k_{D \cap \hat{U}}(z) \geq \inf_{w \in D \cap b\hat{U}}\big(\ell_D(z, w)\big)^{2n} k_{D \cap U}(z).
    \end{align}
    Using Lemma \ref{Schwarz}, we have
    \begin{align}\label{48a}
        \operatorname{exp}\big(g_D(w, z)\big) \leq \ell_D(w, z) = \ell_D(z, w),
    \end{align}
    for $w, z \in D$. From \eqref{46a}, \eqref{47a} and \eqref{48a}, we have
    \begin{align}\label{60}
        k_D(z) \geq \operatorname{exp}\big(2n m\phi(z)\big) k_{D \cap U}(z),
    \end{align}
    for $z \in D \cap V$.

If $D$ is a bounded domain, then we may use Lemma \ref{4.3} instead of Lemma \ref{4.2} to prove \eqref{60} for any pair of neighbourhoods $V$ and $U$ of $p$ satisfying $V \subset \subset U$ and $D \cap U$ is 
connected.

\emph{(ii)} To prove the strong localization of the quotient invariant, we first show that \(q_D\) and \(q_{D \cap U}\) are well defined near \(p\), i.e., \(k_D\) and \(k_{D \cap U}\) are positive near \(p\). We then use the strong localization of the Kobayashi–Eisenman volume element to obtain \eqref{38}.

Let \(U_0\) be a bounded neighbourhood of \(p\). By part \emph{(i)} of Theorem~\ref{1.1}, there exist a neighbourhood \(V_0 \subset U_0\) of \(p\) and a constant \(m_0>0\) such that
\begin{align}\label{62}
    k_D(z) \ge e^{m_0\phi(z)}\, k_{D \cap U_0}(z),
\end{align}
for \(z \in D \cap V_0\).

Since \(U_0\) is bounded, we have $
k_{D \cap U_0}(z)>0.$
Therefore, by \eqref{62}, it follows that
\begin{align}\label{63}
    k_D(z)>0
\end{align}
for \(z \in D \cap V_0\). Hence \(q_D\) is well defined on \(D \cap V_0\).

Now let \(U\) be any neighbourhood of \(p\). Then \(p \in b(D \cap U)\), and \(p\) is a local peak point for \(D \cap U\). Arguing as above, there exists a neighbourhood \(V_0'\) of \(p\) such that
\begin{align}\label{64}
    k_{D \cap U}(z)>0
\end{align}
for \(z \in D \cap U \cap V_0'\). Hence \(q_{D \cap U}\) is well defined on \(D \cap U \cap V_0'\).

Again by part \emph{(i)} of Theorem~\ref{1.1}, there exist a neighbourhood \(V \subset U\) of \(p\) and a constant \(m>0\) such that
\begin{align}\label{65}
    k_D(z) \ge e^{m\phi(z)}\, k_{D \cap U}(z),
\end{align}
for \(z \in D \cap V\).

Using the monotonicity of the Carathéodory--Eisenman volume element under domain inclusion together with \eqref{65}, we obtain
\begin{align} \label{38}
    q_{D \cap U}(z)
    =
    \frac{c_{D \cap U}(z)}{k_{D \cap U}(z)}
    \ge
    e^{m\phi(z)}
    \frac{c_D(z)}{k_D(z)}
    =
    e^{m\phi(z)} q_D(z),
\end{align}
for $z \in D \cap V \cap V_0 \cap V_0'$.

If $D$ is bounded, then both $k_D$ and $k_{D \cap U}$ are positive. Hence, we may apply part \emph{(i)} of Theorem~\ref{1.1} directly to obtain the desired result.
\end{proof}
\section{Asyptotic behaviour of the Kobayashi–Eisenman volume element}\label{Cone}
In this section, we prove the nontangential asymptotic limit of the Kobayashi–Eisenman volume element at exponentially flat infinite type boundary points of domains in $\mathbb{C}^{n + 1}$. After establishing the appropriate localization Lemma~\ref{1.3}, we apply the method of scaling to prove our result (Theorem~\ref{I_1 2}).

Let $D \subset \mathbb{C}^{n + 1}$ be a bounded smooth domain with $0 \in bD$ and let $\gamma(t) = (-t, 0)$. If $0$ is an exponentially flat boundary point, then there exists a neighbourhood $U$ of the origin, such that
\begin{align}\label{*}
    D \cap U = \{z \in U: \rho(z) < 0\},
\end{align}
where the defining function $\rho$ is defined as in \eqref{1}, and $D \cap U$ is convex. 
Since $D \cap U$ is convex, by Lemma \ref{bdd psh fun} there exist a bounded plurisubharmonic function ${\theta}$ on $D$ and a constant $r \in (0, 1)$ such that
\begin{align}\label{40}
    {\theta}(z) =  |z|^2,
\end{align}
for $z \in D \cap \mathbb{B}^{n + 1}(0, r)$. 
Let $W \subset \subset \mathbb{B}^{n + 1}(0, r) \cap U$ be a neighbourhood of origin and assume 
\[\Omega = \{z \in \mathbb{C}^n: \rho(z) < 0\},\]
then $D \cap W = \Omega \cap W$.

To prove the theorem, we follow the construction of the domain as given in \cite{ravi}.

We first cut out a portion of $D$ near the origin, depending on the point $\gamma(t)$, and denote it by $D_t^{\epsilon}$ for $\epsilon > 0$. We then localize $k_D$ to $k_{D_t^{\epsilon}}$. Next, we apply the scaling map $\Sigma$ and biholomorphic map $f$ on $D_t^{\epsilon}$ so that the resulting domain converges to $\mathbb{D} \times \mathbb{B}^n$, we refer to as the scaling lemma. Using the scaling lemma, we conclude $k_{f \circ \Sigma(D_t^{\epsilon})}(0)$ converges to $k_{\mathbb{D} \times \mathbb{B}^n}(0)$. Combining these arguments, we obtain the asymptotic behaviour of $k_D\big(\gamma(t)\big)$.

For small $\epsilon, t > 0$, define
\begin{align}
    D_t^{\epsilon} &= \left\{(z_1, \dots, z_{n+1}) \in W : \rho(z_1, z_2, \dots, z_{n+1}) < 0, \operatorname{Re}z_1 > - t^{{1}/{(1+\epsilon)^2}}\right\},
\end{align}
and $h_\epsilon : D_t^{\epsilon} \to \mathbb{D}$ is the holomorphic peak function of $D_t^{\epsilon}$ at zero, given by 
\begin{align}
    h_\epsilon(z) = \operatorname{exp}\left(-(-z_1)^{{1}/{(1+ \epsilon)}}\right). 
\end{align}
Therefore, the function $\phi_{\epsilon} : D_t^{\epsilon} \to \mathbb{D}$, defined by
\begin{align}
    \phi_{\epsilon}(z) = \operatorname{log}|h_{\epsilon}(z)| = -|z_1|^{1/(1 + \epsilon)} \operatorname{cos}\left(\frac{\operatorname{arg}(-z_1)}{1 + \epsilon}\right), \quad z \in D_t^{\epsilon},
\end{align}
is a local plurisubharmonic peak function at $0$.

Define
\begin{align*}
    d^*(t) &:= 
    \operatorname{min}\{s \in\mathbb{R}^+: se_2 -te_1 \in b\Omega\} = \sqrt{\phi^{-1}(t)}, \\
    d_1^\epsilon(t) &:=
     \operatorname{sup} \left\{|z'| : z \in W, \phi\left(\left|z_2\right|^2 + \dots + |z_{n+1}|^2\right) \leq t^{\frac{1}{(1+\epsilon)}}\right\} = \sqrt{\phi^{-1}\left(t^{\frac{1}{(1 + \epsilon)}}\right)}, \text{ and}\\
    d_2^\epsilon(t) &:=
    \operatorname{sup}\left\{|z'| : z \in W, \phi\left(\left|z_2\right|^2 + \dots + |z_{n+1}|^2\right) \leq t^{\frac{1}{(1+\epsilon)^2}}\right\} = \sqrt{\phi^{-1}\left(t^{\frac{1}{(1 + \epsilon)^2}}\right)}.
\end{align*}
Define the scaling map $\Sigma : \mathbb{C}^{n+1} \to \mathbb{C}^{n+1}$
by
\begin{align}
    \Sigma(z_1, z') = \left(\frac{z_1}{t}, \frac{z'}{d^*(t)}\right),
\end{align}
where $z' = (z_2, \dots, z_{n+1})$.
\begin{Remark}
Here, \( d^{*}(t) \), \( d_{1}^{\epsilon}(t) \), and \( d_{2}^{\epsilon}(t) \) denote the distances from \( -t e_1 \), \( -t^{\frac{1}{1+\epsilon}} e_1 \), and \( -t^{\frac{1}{(1+\epsilon)^2}} e_1 \), respectively, to \( b\Omega \) in the direction of \( e_2 \). 
By the definitions of \( d^{*}, d_1^{\epsilon}, \) and \( d_2^{\epsilon} \), we have
    \begin{align}
        d^{*}(t) \leq d_1^{\epsilon}(t) \leq d_2^{\epsilon}(t),
    \end{align}
    for all sufficiently small \( \epsilon, t > 0 \), and using \eqref{asy of psi}, we obtain
    \begin{align}
        \lim_{t \to 0^{+}} \frac{d_1^{\epsilon}(t)}{d^{*}(t)} &= (1 + \epsilon)^{\frac{1}{2m}},  \text{ and} \quad\\
        \lim_{t \to 0^{+}} \frac{d_2^{\epsilon}(t)}{d^{*}(t)} &= (1 + \epsilon)^{\frac{1}{m}} \label{asy of d_2^{epsilon}}.        
    \end{align}
\end{Remark}
We localize $k_D$ to $k_{D_t^{\epsilon}}$ in the following lemma. The proof follows the same method as in Lemma \ref{4.2}; however, in the present setting, we explicitly choose the constants to construct negative plurisubharmonic functions on the domain $D$.
\begin{lem}\label{1.3}
    Let $D \subset \mathbb{C}^{n + 1}$ be a domain and let $0 \in bD$. Assume $0$ is an exponentially flat boundary point and $\gamma(t) = (-t, 0)$, then, for $\epsilon > 0$,
    \begin{align}
        \lim_{t \to 0^{+}}\frac{k_D\big(\gamma(t)\big)}{k_{D_t^{\epsilon}}\big(\gamma(t)\big)} = 1.
    \end{align}
\end{lem}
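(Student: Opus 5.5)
The upper bound is immediate from monotonicity. Since $D_t^{\epsilon}\subset D$, we have $k_D(\gamma(t))\le k_{D_t^{\epsilon}}(\gamma(t))$, so the ratio is at most $1$. For the lower bound I would apply Proposition~\ref{loc of kob. vol.} in $\mathbb{C}^{n+1}$ with $D_1=D_t^{\epsilon}$:
\[
k_D\big(\gamma(t)\big)\ \ge\ \inf_{w\in D\cap bD_t^{\epsilon}}\big(\ell_D(\gamma(t),w)\big)^{2(n+1)}\,k_{D_t^{\epsilon}}\big(\gamma(t)\big).
\]
As in \eqref{48a}, Lemma~\ref{Schwarz} gives $\ell_D(\gamma(t),w)\ge \exp\big(g_D(w,\gamma(t))\big)$. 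So it suffices to prove
\[
\inf_{w\in D\cap bD_t^{\epsilon}} g_D\big(w,\gamma(t)\big)\to 0 \quad\text{as } t\to0^+ .
\]
To get this, I would construct, for each such $w$, a negative plurisubharmonic function $u_w$ on $D$ with a logarithmic pole at $w$ and $u_w(\gamma(t))\to0$ uniformly in $w$. The point set $D\cap bD_t^{\epsilon}$ splits into two parts, which I would treat separately.

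\emph{Far part:} $w\in D\cap bW$. These poles stay at a fixed positive distance from $0$. This is exactly the setting of Lemma~\ref{4.2}. We glue $\tfrac12 f+\hat m(\theta-\sup_D\theta-1)$ with $m\phi_0$, where $\theta$ is as in \eqref{40} and $\phi_0$ is a fixed local psh peak function at $0$; for instance $\phi_0(z)=-|z_1|^{1/2}\cos(\tfrac12\arg(-z_1))$, which is psh on $D\cap W$ because $D\cap W\subset\{\operatorname{Re}z_1<0\}$. This gives $g_D(w,\gamma(t))\ge m\phi_0(\gamma(t))=-m t^{1/2}\to0$, with $m$ independent of $w$.

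\emph{Near part:} $w$ with $\operatorname{Re}w_1=-t^{a}$, where $a=(1+\epsilon)^{-2}<1$. Here the pole approaches $0$, so Lemma~\ref{4.2} cannot be used with fixed constants, and the constants must be chosen explicitly. I would exploit the fact that $D\cap W$ lies in the left half-plane $H=\{\operatorname{Re}z_1<0\}$ in the $z_1$-variable. I take the one-variable Green function
\[
G_w(z)=\log\left|\frac{z_1-w_1}{z_1+\overline{w_1}}\right| .
\]
It is plurisubharmonic and negative on $D\cap W$. It satisfies $G_w\le\log|z-w|+O(1)$ near $w$, which is all the definition of $g_D$ requires. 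At $z=\gamma(t)$, an elementary computation gives
\[
G_w(\gamma(t))=\tfrac12\log\Big(1-\frac{4t\,t^{a}}{|{-t}+\overline{w_1}|^2}\Big),
\]
and $|w_1|\ge t^a$, so $G_w(\gamma(t))\ge -C\,t^{1-a}\to0$.

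The main obstacle is extending $G_w$ globally to $D$ without destroying this estimate. On $D\cap bW$, and indeed on $D\cap W$ away from a small neighbourhood of $0$, $G_w$ is bounded below by some $-\eta(t)$ with $\eta(t)\to0$, since $|\operatorname{Re}w_1|\to0$ while $|z_1|$ stays bounded below there. I would define
\[
u_w=\max\Big\{G_w,\ -\eta(t)+C_0\big(\theta-\sup_{D}\theta\big)-\delta(t)\Big\} \text{ on } D\cap W, \qquad u_w=-\eta(t)+C_0\big(\theta-\sup_D\theta\big)-\delta(t) \text{ on } D\setminus W,
\]
with $\delta(t)\to0$ chosen so that the second term dominates near $bW$. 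This is possible because $\theta=|z|^2$ is bounded below by a positive constant near $bW$. The constants must also be chosen so that near $0$ and near $w$ the first term dominates, which requires checking that $C_0|z|^2$ is small relative to $\eta(t)$ there. If this balance fails, I would instead shrink the gluing region to a ball of radius comparable to $t^{a/2}$ and use $\phi_{\epsilon}$ in place of $\theta$, following the explicit-constant version of the proof of Lemma~\ref{4.2}. In either case the result is $g_D(w,\gamma(t))\ge u_w(\gamma(t))\to0$ uniformly in $w$, which proves the lemma.
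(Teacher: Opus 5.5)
Your reduction is sound and matches the paper's. Monotonicity gives the upper bound. Proposition~\ref{loc of kob. vol.} (with the correct exponent $2(n+1)$) together with Lemma~\ref{Schwarz} reduces the lower bound to showing $\inf_{w\in D\cap bD_t^{\epsilon}}g_D(w,\gamma(t))\to0$. Your far-part argument and your computation $G_w(\gamma(t))\ge -Ct^{1-a}$ are also correct.

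\textbf{The near part has a genuine gap: your function $u_w$ is not an admissible competitor for $g_D(w,\cdot)$.} Write $h_t=-\eta(t)+C_0(\theta-\sup_D\theta)-\delta(t)$. Since $\theta$ is bounded, $h_t$ is bounded below on $D$. But $G_w\to-\infty$ at $w$, so $u_w=\max\{G_w,h_t\}$ equals $h_t$ near $w$. The logarithmic pole is therefore lost, and $u_w\le\log|\cdot-w|+O(1)$ fails. Your requirement that ``near $w$ the first term dominates'' cannot be met by any bounded second term.

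\textbf{The gluing across $bW$ also fails.} There you need an \emph{upper} bound $G_w<h_t$ near $D\cap bW$. However, $G_w$ is the Green function of the whole half-plane, not a peak function at $0$: $G_w(z)\to0$ whenever $\operatorname{Re}z_1\to0$. This happens, for example, at the point where the imaginary $z_1$-axis leaves $W$. That point lies in $bD$ and is approached by points of $D\cap W$, and $h_t$ is continuous and strictly negative there. Indeed, your own claimed bound $G_w\ge-\eta(t)$ on $D\cap bW$, combined with $h_t\le-\eta(t)-\delta(t)$, gives the opposite of what the gluing needs.

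\textbf{Your ``near'' poles need not approach $0$.} The conditions $\operatorname{Re}w_1=-t^{a}$ and $\rho(w)<0$ only force $|w'|$ to be small. Meanwhile $\operatorname{Im}w_1$ ranges over an interval comparable to the size of $W$, so such $w$ can lie arbitrarily close to $bW$, and no uniform $\eta(t)\to0$ exists. The closing fallback sentence is not an argument.

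\textbf{What is missing is the correct architecture, which the paper uses for all $w\in D\cap bD_t^{\epsilon}$ at once.}
\begin{enumerate}
\item[(1)] The pole must be carried by a \emph{globally} defined function, $\psi_t^{\epsilon}=\frac12 f_t^{\epsilon}+m_t^{\epsilon}(\theta-\sup_D\theta-1)$. Here $f_t^{\epsilon}$ is $\log|z-w|^2$ cut off at the slowly shrinking scale $|z-w|^2\sim t^{\epsilon/(4(1+\epsilon)^2)}$. The resulting negative Levi form, of size $m_t^{\epsilon}\sim|\log t|\,t^{-\epsilon/(2(1+\epsilon)^2)}$, is absorbed by $\theta=|z|^2$.
\item[(2)] The maximum is then taken with a large multiple $\widetilde m_t^{\epsilon}\phi_{\epsilon}$ of a genuine peak function, and only on $\{|z_1|<t^{1/(1+\epsilon)}\}$. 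This region contains $\gamma(t)$ but no pole, since every pole has $|w_1|\ge t^{1/(1+\epsilon)^2}$.
\item[(3)] The gluing works because $\phi_{\epsilon}\le -c_0^{\epsilon}t^{1/(1+\epsilon)^2}$ on the boundary of that region, while $\psi_t^{\epsilon}$ is bounded below there uniformly in $w$.
\end{enumerate}
The separation of scales $t\ll t^{1/(1+\epsilon)}\ll t^{1/(1+\epsilon)^2}$ then yields $\widetilde m_t^{\epsilon}\,t^{1/(1+\epsilon)}\lesssim|\log t|\,t^{\epsilon/(2(1+\epsilon)^2)}\to0$, which is the estimate your construction was meant to deliver.
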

\begin{proof}
Choose a cut-off function $\chi \in C_c^\infty(-1, 1)$ such that 
\begin{align*}
    \chi = 1 \text{ on } (-1/2, 1/2) \quad \text{and} \quad 0 \leq \chi \leq 1. 
\end{align*}
Let $\epsilon$ and $t$ be sufficiently small positive real numbers, and let $w \in 
D \cap b{D}_t^{\epsilon}$. Define
\begin{align}
    f_t^{\epsilon}(z) = \chi\left(\frac{|z - w|^2}{t^{\frac{\epsilon}{4(1 + \epsilon)^2}}}\right)\operatorname{log}(|z - w|^2).
\end{align}
Let $z \in \{v \in \mathbb{C}^{n + 1}: {t^{\frac{\epsilon}{4(1 + \epsilon)^2}}}/2 \leq |v - w|^2 \leq  {t^{\frac{\epsilon}{4(1 + \epsilon)^2}}}\}$, for $\xi \in \mathbb{C}^{n + 1}$, consider
\begin{align*}
    \sum_{i, j = 1}^{n + 1}\frac{\partial^2 f_t^{\epsilon}}{\partial z_i \partial \bar{z}_j}(z) \xi_i  \bar{\xi}_j &\geq   \sum_{i, j = 1}^{n + 1}\frac{\partial^2}{\partial z_i \partial \bar{z}_j} \chi\left(\frac{|z - w|^2 }{t^{\frac{\epsilon}{4 (1 + \epsilon)^2}}}\right) \operatorname{log}(| z - w |^2) \xi_i \bar{\xi}_j \\
    &+ \sum_{i, j = 1}^{n + 1} \frac{\partial}{\partial z_i} \chi\left(\frac{| z - w|^2 }{t^{\frac{\epsilon}{4 (1 + \epsilon)^2}}}\right) \frac{\partial}{\partial \bar{z}_j} \operatorname{log}(| z - w| ^2)\xi_i \bar{\xi}_j \\
    &+ \sum_{i, j = 1}^{n+ 1}\frac{\partial}{\partial \bar{z}_j} \chi\left(\frac{|z - w|^2 }{t^{\frac{\epsilon}{4 (1 + \epsilon)^2}}}\right) \frac{\partial}{\partial z_i} \operatorname{log}(|z - w| ^2)\xi_i\bar{\xi}_j\\
    &\geq \frac{M|\xi|^2}{t^{\frac{\epsilon}{2(1 + \epsilon)^2}}}\operatorname{log}(|z - w|^2) \\
    &+ \frac{2}{ {t^{\frac{\epsilon}{4(1 + \epsilon)^2}}|z -w|^2}}\chi'\left(\frac{|z - w|^2}{ {t^{\frac{\epsilon}{4(1 + \epsilon)^2}}}}\right)\bigg|\sum_{i = 1}^{n + 1}(\bar{z_i} - \bar{w}_i) \xi_i\bigg|^2\\
    &\geq  \frac{M|\xi|^2}{t^{\frac{\epsilon}{2(1 + \epsilon)^2}}}\operatorname{log}(|z - w|^2) - \frac{M}{ {t^{\frac{\epsilon}{4(1 + \epsilon)^2}}|z -w|^2}} \bigg|\sum_{i = 1}^{n + 1}(\bar{z_i} - \bar{w}_i) \xi_i\bigg|^2\\
    &\geq \frac{M|\xi|^2}{{t^{\frac{\epsilon}{2(1 + \epsilon)^2}}}} \operatorname{log}({t}^{\frac{\epsilon}{4(1 + \epsilon)^2}}/2) - \frac{M|\xi|^2}{{t^{\frac{\epsilon}{4(1 + \epsilon)^2}}}}\\
    &\geq \frac{M \epsilon |\xi|^2}{{4(1 + \epsilon)^2 t^{\frac{\epsilon}{2(1 + \epsilon)^2}}}}\operatorname{log}t -  \frac{M|\xi|^2}{{t^{\frac{\epsilon}{2(1 + \epsilon)^2}}}} \operatorname{log}(2) - \frac{M|\xi|^2}{t^{\frac{\epsilon}{4(1 + \epsilon)^2}}}\\
    &\geq - \frac{M|\xi|^2}{t^{\frac{\epsilon}{2(1 + \epsilon)^2}}} \left( 1 - \frac{\epsilon}{4(1 + \epsilon)^2} \operatorname{log}t\right).
\end{align*}
 The constant \(M>0\) appearing above may vary from step to step and is independent of sufficiently small $t > 0$ and $\epsilon > 0$. Hence,
\begin{align}
    \sum_{i, j = 1}^{n + 1} \frac{\partial^2 f_t^{\epsilon}}{\partial z_i \partial \bar{z}_j}(z)\xi_i \bar{\xi}_j \geq -M \frac{\left(1 - \frac{\epsilon}{4 (1 + \epsilon)^2}\operatorname{log}t\right)}{t^{\frac{\epsilon}{2(1 + \epsilon)^2}}} |\xi|^2 =: -m_t^{\epsilon}|\xi|^2,
\end{align}
for $\xi \in \mathbb{C}^{n + 1}$ and $z \in \mathbb{C}^{n + 1}$. Define 
\begin{align}
    \psi_t^{\epsilon} := \frac{1}{2}f_t^{\epsilon} + m_t^{\epsilon} \bigg(\theta - \sup_{D} {\theta} - 1\bigg),
\end{align}
where $\theta$ is the bounded plurisubharmonic function on $D$, defined in \eqref{40}.
Then, there exists $t_0(\epsilon) > 0$ such that $\psi_t^{\epsilon}$ is a negative plurisubharmonic function on $D$ with pole at $w$ for $t \in \big(0, t_0(\epsilon)\big)$.

Let $\widetilde{m}_t^{\epsilon} > 0$, chosen later, so that 
\begin{align*}
       \widetilde{\phi}_t^{\epsilon} := \begin{cases} 
      \operatorname{max}\{\psi_t^{\epsilon}, \widetilde{m}_t^{\epsilon}\phi_{\epsilon}\},  & D \cap \big\{z \in W: |z_1| < t^{1/(1 + \epsilon)}\big\},\\
      \psi_t^{\epsilon},  & D \setminus \big\{z \in W: |z_1| < t^{1/(1 + \epsilon)}\big\},
      \end{cases}
\end{align*}
is a negative plurisubharmonic function on $D$. Since $\phi_{\epsilon}$ is a plurisubharmonic function on $D_t^{\epsilon} \supset D \cap \{z \in W: |z_1| < t^{1/(1 + \epsilon)}\}$ and $\psi_t^{\epsilon}$ is a plurisubharmonic function on $D$, if we can choose $\widetilde{m}_t^{\epsilon} > 0$ so that $\psi_t^{\epsilon} \geq m_t^{\epsilon} \phi_{\epsilon}$ on $D \cap b\{z \in W : |z_1| < t^{1/ (1 + \epsilon)}\} = D \cap \{z \in W: |z_1| = t^{1/(1 + \epsilon)}\}$, then $\widetilde{\phi}_t^{\epsilon}$ is a plurisubharmonic function on $D$.

Take 
\begin{align}
    \widetilde{m}_t^{\epsilon} = \frac{(1 + \epsilon)^2\operatorname{log}2 - (1 + \epsilon)^2\left(\inf_{D}{\theta} - \sup_{D}{\theta} - 1\right)m_t^{\epsilon} - \operatorname{log}t}{(1 + \epsilon)^2 c_0^{\epsilon}t^{1/(1 + \epsilon)^2}},
\end{align}
where $c_0^{\epsilon} = \operatorname{cos}\left(\frac{\pi}{2(1 + \epsilon)}\right)$.
Let $z \in D \cap \{z \in W: |z_1| = t^{1/(1 + \epsilon)}\}$, consider
\begin{align*}
    \psi_t^{\epsilon}(z) &= \chi\left(\frac{|z - w|^2}{t^{\frac{\epsilon}{4(1+ \epsilon)^2}}}\right) \operatorname{log}\big(|z - w|\big) + m_t^{\epsilon} \big(\hat{\theta}(z) - \operatorname{sup} \hat{\theta} - 1 \big)\\
    &\geq \operatorname{log}\big(|z - w|\big) + m_t^{\epsilon}\big(\inf_{D} {\theta} - \sup_{D} {\theta} - 1\big) \quad\\
    &\geq \operatorname{log}\Big(t^{\frac{1}{(1 + \epsilon)^2}}\big(1 - t^{\frac{\epsilon}{(1 + \epsilon)^2}}\big)\Big) + m_t^{\epsilon} \big(\inf_D\hat{\theta} - \operatorname{sup} \hat{\theta} - 1\big)\quad (\text{as } w \in D \cap bD_t^{\epsilon})\\
    &\geq \frac{1}{(1 + \epsilon)^2} \operatorname{log}t + \operatorname{log}\left(\frac{1}{2}\right) + m_t^{\epsilon}\big(\inf_{D} \hat{\theta} - \sup_{D} \hat{\theta} - 1\big)\\
    &= -\widetilde{m}_t^{\epsilon} c_0^{\epsilon}t^{1/(1 + \epsilon)^2} \\
    &\geq - \widetilde{m}_t^{\epsilon} |z_1|^{1/(1 + \epsilon)} \operatorname{cos}\left(\frac{\operatorname{arg}(-z_1)}{(1 + \epsilon)}\right) \quad (\text{as } \operatorname{Re}z_1 \geq 0)\\
    &= \widetilde{m}^{\epsilon}_t \phi_{\epsilon}(z).
\end{align*}
Therefore, $\widetilde{\phi}_t^{\epsilon}$ is a negative plurisubharmonic function on $D$ with pole at $w$. Hence, the pluricomplex Green function of $D$ with pole at $w$
\begin{align*}
    g_D\big(w, \gamma(t)\big) &= \sup\big\{u\big(\gamma(t)\big) : u \text{ is plurisubharmonic on}\, D , u < 0, u < \operatorname{log}|\cdot - w| + \mathcal{O}_w(1)\big\}\\
    &\geq \widetilde{\phi}_t^{\epsilon}\big(\gamma(t)\big)\\
    &\geq \widetilde{m}_t^{\epsilon}\phi_{\epsilon}\big(\gamma(t)\big)\\
    &= -\widetilde{m}_t^{\epsilon} t^{1/(1 + \epsilon)},
\end{align*}
for $w \in D \cap bD_t^{\epsilon}$. Therefore,
\begin{align}\label{88}
    \inf_{w \in D \cap bD_t^{\epsilon}} g_D\big(w, \gamma(t)\big) \geq - \widetilde{m}_t^{\epsilon} t^{1/(1 + \epsilon)},
\end{align}
for $0 < t < t_0(\epsilon)$. Using Lemma \ref{Schwarz}, we have
\begin{align}\label{48a'}
        \operatorname{exp}\left(g_D\big(w, \gamma(t)\big)\right) \leq \ell_D\big(w, \gamma(t)\big) = \ell_D\big(\gamma(t), w\big).
\end{align}
Using Proposition \ref{loc of kob. vol.}, we have
    \begin{align}\label{47a'}
    k_{D}\big(\gamma(t)\big) \geq \inf_{w \in D \cap bD_t^{\epsilon}}\left(\ell_D\big(\gamma(t), w \big)\right)^{2n} k_{D_t^{\epsilon}}\big(\gamma(t)\big),
    \end{align}
    for $0 < t < t_0(\epsilon)$. From \eqref{88}, \eqref{48a'} and \eqref{47a'}, we conclude
    \begin{align*}
        k_D\big(\gamma(t)\big) \geq \operatorname{exp}\big(- \widetilde{m}_t^{\epsilon} t^{1/(1 + \epsilon)}\big) k_{D_t^{\epsilon}}\big(\gamma(t)\big),
    \end{align*}
    for $0 < t < t_0(\epsilon)$.
Therefore,
\begin{align}
    1 \geq \frac{k_{D}\big(\gamma(t)\big)}{k_{D_t^{\epsilon}}\big(\gamma(t)\big)} \geq \operatorname{exp}\big(-\widetilde{m}_t^{\epsilon} t^{1/(1 + \epsilon)}\big).
\end{align}
Since $\operatorname{exp}\big(-\widetilde{m}_t^{\epsilon} t^{1/(1 + \epsilon)}\big) \to 1$ as $t \to 0^{+}$, 
\begin{align}
    \lim_{t \to 0^{+}} \frac{k_{D}\big(\gamma(t)\big)}{k_{D_t^{\epsilon}}\big(\gamma(t)\big)} = 1.
\end{align}
\end{proof}
Now we recall the scaling lemma, proved in \cite{ravi}, to complete the proof of Theorem \ref{I_1 2}.

Let $f : \{z \in \mathbb{C}^{n + 1}: \operatorname{Re}z_1 < 0\} \to \mathbb{D} \times \mathbb{C}^n$ be the biholomorphism given by 
\[f(z_1, z') = \left(\frac{1 + z_1}{1 - z_1}, z' \right).\]
In the following scaling lemma, the limiting domain of $f \circ \Sigma (D_t^\epsilon)$ is $\mathbb{D} \times \mathbb{B}^n$. 
\begin{lem}\cite[Lemma 3.4]{ravi}\label{ScalingLemma2}
    Let $D \subset \mathbb{C}^{n + 1}$ be as in \eqref{*}. Then, for every $\epsilon, \delta > 0,$ there exists $t_0(\delta, \epsilon) > 0$ such that
    \begin{align}\label{inclusions 2}
        (1-\delta)\left(\mathbb{D}\times \mathbb{B}^n\right) \subset f \circ \Sigma \left(D_t^\epsilon\right) \subset
        \mathbb{D} \times \mathbb{B}^n\big(0, {d}_2^\epsilon(t)/d^*(t)\big),
    \end{align}
for each $ 0 < t < t_0(\delta, \epsilon
)$. 
\end{lem}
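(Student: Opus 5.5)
We prove the two inclusions in \eqref{inclusions 2} separately. Throughout, $D_t^\epsilon$ is defined by three conditions: $z\in W$, $\operatorname{Re}z_1<-\phi(|z'|^2)$, and $\operatorname{Re}z_1>-t^{1/(1+\epsilon)^2}$. Writing $w=\Sigma(z)=(z_1/t,\,z'/d^*(t))$, the scaled domain $\Sigma(D_t^\epsilon)$ is the set of $w$ with
\[
\operatorname{Re}w_1<-\frac{\phi\big(d^*(t)^2|w'|^2\big)}{t},\qquad \operatorname{Re}w_1>-t^{\frac{1}{(1+\epsilon)^2}-1},\qquad \Sigma^{-1}(w)\in W.
\]
The key normalization is $\phi(d^*(t)^2)=t$. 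The plan is to study the rescaled function $\Phi_t(s):=\phi(d^*(t)^2 s)/t$ for $s\ge 0$. We want to show that $\Phi_t(s)\to 0$ locally uniformly on $[0,1)$ and $\Phi_t(s)\to\infty$ locally uniformly on $(1,\infty)$. Since $\phi$ is increasing near $0$, each $\Phi_t$ is monotone, so it suffices to check pointwise limits.

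For these limits we use \eqref{asy of psi}. Since $\phi(x)=\exp(-1/\psi(x))$ and $\psi(x)\sim Cx^m$, setting $x_t=d^*(t)^2\to0$ gives
\[
\log\Phi_t(s)=\frac{1}{\psi(x_t)}-\frac{1}{\psi(sx_t)}=\frac{1}{C x_t^m}\big(1+o(1)\big)-\frac{1}{C s^m x_t^m}\big(1+o(1)\big).
\]
For $s<1$ this tends to $-\infty$, and for $s>1$ to $+\infty$. The main obstacle is the $o(1)$ errors, because they are multiplied by the diverging factor $x_t^{-m}$. We handle this by fixing $s\neq1$ and choosing the tolerance $\eta$ in $\psi(x)/x^m\in(C(1-\eta),C(1+\eta))$ small relative to $|1-s^{-m}|$. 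Then the leading difference $(1-s^{-m})/(Cx_t^m)$ dominates the errors $O(\eta)/x_t^m$, and we get the sign and divergence.

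\textbf{Left inclusion.} Take $w\in(1-\delta)(\mathbb{D}\times\mathbb{B}^n)$ and set $u=f^{-1}$ applied to its first coordinate, so that $u=(w_1-1)/(w_1+1)$. Then $u$ ranges over a compact subset $K_\delta$ of the left half-plane, so $\operatorname{Re}u\le-c_\delta<0$ and $|u|\le R_\delta$, while $|w'|\le1-\delta$. We check the three defining conditions for $z=\Sigma^{-1}(u,w')$:
\begin{enumerate}
\item Uniform convergence gives $\Phi_t(|w'|^2)\le\Phi_t((1-\delta)^2)<c_\delta$ for small $t$, so $\operatorname{Re}u<-\Phi_t(|w'|^2)$.
\item Since $t^{1/(1+\epsilon)^2-1}\to\infty$, we have $\operatorname{Re}u>-R_\delta>-t^{1/(1+\epsilon)^2-1}$ for small $t$.
\item $|z_1|\le tR_\delta\to0$ and $|z'|\le d^*(t)\to0$, so $z\in W$.
\end{enumerate}

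\textbf{Right inclusion.} The first coordinate lies in $\mathbb{D}$ automatically, since $\operatorname{Re}z_1<0$ on $D_t^\epsilon$ and $f$ maps the left half-plane onto $\mathbb{D}$. For $z\in D_t^\epsilon$ we have $\phi(|z'|^2)<-\operatorname{Re}z_1<t^{1/(1+\epsilon)^2}$. By strict monotonicity of $\phi$ on $(0,\epsilon_0)$, this gives $|z'|<\sqrt{\phi^{-1}(t^{1/(1+\epsilon)^2})}=d_2^\epsilon(t)$, which matches the sup definition of $d_2^\epsilon$. Hence $|w'|<d_2^\epsilon(t)/d^*(t)$, and the inclusion follows with no use of uniform estimates.

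The only delicate point is therefore the quantitative comparison in the first step, together with choosing $t_0(\delta,\epsilon)$ as the minimum of the finitely many thresholds arising in the left inclusion.
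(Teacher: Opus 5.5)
Your proof is correct and complete. The paper does not prove this lemma itself but quotes it from \cite{ravi}; your direct verification is the natural route. The right inclusion follows straight from the cut-off $\operatorname{Re}z_1>-t^{1/(1+\epsilon)^2}$ and the strict monotonicity of $\phi$, and the left inclusion needs only the single limit $\phi\big((1-\delta)^2d^*(t)^2\big)/t\to 0$, which is where exponential flatness enters.

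Two small remarks. First, you can avoid the error-term bookkeeping by writing $\phi(sx_t)/t=t^{\psi(x_t)/\psi(sx_t)-1}$, whose exponent tends to $s^{-m}-1>0$. Second, you (like the paper) implicitly take $W$ small enough that $|z'|^2<\epsilon_0$ on $W$; this is what makes $\phi\ge 0$ and strictly increasing wherever you use it.
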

We use the localization Lemma \ref{1.3} and the scaling Lemma \ref{ScalingLemma2} to prove the asymptotic behaviour of the Kobayasi--Eisenman volume element at exponentially flat boundary points of domains in $\mathbb{C}^{n + 1}$.
\begin{thm}\label{I_1 2}
     Let $D \subset \mathbb{C}^{n + 1}$ and $W \subset \mathbb{C}^{n + 1}$ be as defined at the beginning of Section \ref{Cone}. Further, assume that $\gamma(t) = (-t, 0)$. Then,
     \begin{align}
       \lim_{t \to 0^{+}} \frac{k_{D}\big(\gamma(t)\big)}{t^{-2} d^{*}(t)^{-2n}} = \frac{1}{4}.
   \end{align}
\end{thm}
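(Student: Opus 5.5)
The plan is to combine the localization Lemma~\ref{1.3}, the transformation rule for $k$, and the scaling Lemma~\ref{ScalingLemma2}, and then let the auxiliary parameters $\delta,\epsilon$ tend to zero.

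\textbf{Step 1: localize and rescale.} Fix $\epsilon,\delta>0$. Set $F:=f\circ\Sigma$, which is a biholomorphism of $D_t^\epsilon$ onto $F(D_t^\epsilon)$. Since $\Sigma(\gamma(t))=(-1,0)$ and $f(-1,0)=(0,0)$, we have $F(\gamma(t))=0$. The transformation rule (with equality for biholomorphisms) gives
\begin{align*}
k_{D_t^\epsilon}\big(\gamma(t)\big)=\big|\det F'(\gamma(t))\big|^2\,k_{F(D_t^\epsilon)}(0).
\end{align*}
The Jacobian determinant of $\Sigma$ is $t^{-1}d^*(t)^{-n}$. The first component of $f$ has derivative $2/(1-z_1)^2$, which equals $1/2$ at $z_1=-1$. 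Hence
\begin{align*}
\big|\det F'(\gamma(t))\big|^2=\tfrac14\,t^{-2}d^*(t)^{-2n},
\end{align*}
and therefore
\begin{align*}
\frac{k_{D_t^\epsilon}(\gamma(t))}{t^{-2}d^*(t)^{-2n}}=\tfrac14\,k_{F(D_t^\epsilon)}(0).
\end{align*}

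\textbf{Step 2: two-sided bounds on $k_{F(D_t^\epsilon)}(0)$.} For $0<t<t_0(\delta,\epsilon)$, the inclusions \eqref{inclusions 2} and monotonicity of $k$ under inclusion give
\begin{align*}
k_{\mathbb{D}\times\mathbb{B}^n(0,R_t)}(0)\le k_{F(D_t^\epsilon)}(0)\le k_{(1-\delta)(\mathbb{D}\times\mathbb{B}^n)}(0),
\end{align*}
where $R_t=d_2^\epsilon(t)/d^*(t)$. The linear maps $z\mapsto z/(1-\delta)$ and $(z_1,z')\mapsto(z_1,z'/R_t)$ map these domains biholomorphically onto $\mathbb{D}\times\mathbb{B}^n$. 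Combined with Corollary~\ref{k for poly ball}, this shows the right-hand side equals $(1-\delta)^{-2(n+1)}$ and the left-hand side equals $R_t^{-2n}$. By \eqref{asy of d_2^{epsilon}}, $R_t\to(1+\epsilon)^{1/m}$ as $t\to0^+$.

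\textbf{Step 3: pass back to $D$.} Monotonicity gives $k_D(\gamma(t))\le k_{D_t^\epsilon}(\gamma(t))$, since $D_t^\epsilon\subset D$. Lemma~\ref{1.3} gives $k_D(\gamma(t))/k_{D_t^\epsilon}(\gamma(t))\to1$. Together with Steps 1--2 this yields
\begin{align*}
\tfrac14(1+\epsilon)^{-2n/m}\le\liminf_{t\to0^+}\frac{k_D(\gamma(t))}{t^{-2}d^*(t)^{-2n}}\le\limsup_{t\to0^+}\frac{k_D(\gamma(t))}{t^{-2}d^*(t)^{-2n}}\le\tfrac14(1-\delta)^{-2(n+1)}.
\end{align*}
The left- and right-hand bounds depend only on $\epsilon$ and $\delta$ respectively, so letting $\delta,\epsilon\to0$ gives the limit $1/4$.

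\textbf{Expected obstacle.} The analytic difficulty is already absorbed into Lemma~\ref{1.3} and Lemma~\ref{ScalingLemma2}. The main point needing care is the order of limits. For each fixed pair $(\epsilon,\delta)$, one must first take $t\to0$ with $t<t_0(\delta,\epsilon)$, which is required both for the inclusions and for Lemma~\ref{1.3}; only afterwards may $\epsilon,\delta\to0$. One must also check that the constant is exactly $1/4$, which rests on $f'(-1)=1/2$ and on $k_{\mathbb{D}\times\mathbb{B}^n}(0)=1$.
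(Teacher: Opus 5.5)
Your proposal is correct and matches the paper's proof step for step. It uses the same Jacobian factor $(2t)^{-2}d^*(t)^{-2n}$ and the same two-sided bounds on $k_{f\circ\Sigma(D_t^\epsilon)}(0)$ from Lemma~\ref{ScalingLemma2} after rescaling to $\mathbb{D}\times\mathbb{B}^n$. It then applies Lemma~\ref{1.3}, \eqref{asy of d_2^{epsilon}} and Corollary~\ref{k for poly ball} before letting $\delta,\epsilon\to0$. The only minor difference is that you get the upper bound directly from monotonicity, $k_D\le k_{D_t^\epsilon}$, whereas the paper passes both bounds through the ratio in Lemma~\ref{1.3}.
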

\begin{proof}
 Let $t > 0$ be small, we have
    \begin{align}\label{24}
        k_{D_t^{\epsilon}}\big(\gamma(t)\big) = {(2t)^{-2}d^*(t)^{-2n} k_{f \circ \Sigma (D_t^{\epsilon})}(0)}.
    \end{align}
    Let $\epsilon, \delta > 0$. From Lemma \ref{ScalingLemma2}, there exists $t_{0}(\delta, \epsilon) > 0$ such that
    \begin{align*}
        k_{f\circ \Sigma(D_t^{\epsilon})}(0) &\leq k_{(1-\delta)(\mathbb{D} \times \mathbb{B}^n)}(0)\\
        &= (1 - \delta)^{-2(n + 1)} k_{\mathbb{D} \times \mathbb{B}^n}(0),
    \end{align*}
    which implies
    \begin{align}\label{136}
        \frac{k_{f\circ \Sigma(D_t^{\epsilon})}(0)}{k_{\mathbb{D} \times \mathbb{B}^n}(0)} \leq (1 - \delta)^{-2(n + 1)},
    \end{align}
    for each $0 < t < t_0(\delta, \epsilon)$.

    Again using Lemma \ref{ScalingLemma2}, we have
    \begin{align*}
        k_{f\circ \Sigma(D_t^{\epsilon})}(0) &\geq k_{\mathbb{D} \times \mathbb{B}^n(0, d_2^{\epsilon}(t)/d^{*}(t))}{(0)}\\
        &= \left(\frac{d^{*}(t)}{d_2^{\epsilon}(t)}\right)^{2n}k_{\mathbb{D} \times \mathbb{B}^n}{(0)},
    \end{align*}
    which implies
    \begin{align}\label{138}
        \frac{k_{f\circ \Sigma(D_t^{\epsilon})}(0)}{k_{\mathbb{D} \times \mathbb{B}^n}{(0)}} \geq \left(\frac{d^*(t)}{d_2^{\epsilon}(t)}\right)^{2n},
    \end{align}
    for each $0 < t < t_0(\delta, \epsilon)$.

    Now 
    \begin{align}\label{139}
        \frac{(2t)^2 d^*(t)^{2n}{k_{D}\big(\gamma(t)\big)}}{k_{\mathbb{D} \times \mathbb{B}^n}(0)} &=\frac{{{k_{f\circ \Sigma(D_t^{\epsilon})}(0)}}}{k_{\mathbb{D} \times \mathbb{B}^n}(0)} \cdot \frac{(2t)^{2}d^*(t)^{2n}k_{D}\big(\gamma(t)\big)}{{{k_{f\circ \Sigma(D_t^{\epsilon})}(0)}}}\nonumber\\
        &=\frac{{{k_{f\circ \Sigma(D_t^{\epsilon})}(0)}}}{k_{\mathbb{D} \times \mathbb{B}^n}(0)} \cdot \frac{k_{D}\big(\gamma(t)\big)}{{{k_{D_t^{\epsilon}}\big(\gamma(t)\big)}}} \, \, \, \, \, \, \text{     
        (from \eqref{24})}.
    \end{align}
    By using \eqref{136}, \eqref{139} and Lemma \ref{1.3}, we conclude
    \begin{align*}
        \limsup_{t \to 0^+}\frac{(2t)^2 d^*(t)^{2n}{k_{D}\big(\gamma(t)\big)}}{k_{\mathbb{D} \times \mathbb{B}^n}(0)} \leq {(1 - \delta)^{-2(n + 1)}},
    \end{align*}
    for each $\delta > 0$. Hence
    \begin{align}\label{29}
        \limsup_{t \to 0^+}\frac{(2t)^2 d^*(t)^{2n}{k_{D}\big(\gamma(t)\big)}}{k_{\mathbb{D} \times \mathbb{B}^n}(0)} \leq {1}.
    \end{align}
    By using \eqref{138}, \eqref{139}, \eqref{asy of d_2^{epsilon}} and Lemma \ref{1.3}, we conclude
    \begin{equation*}
        \liminf_{t \to 0^{+}} \frac{(2t)^2 d^*(t)^{2n}{k_{D}\big(\gamma(t)\big)}}{k_{\mathbb{D} \times \mathbb{B}^n}(0)} \geq {(1 + \epsilon)^{-2n/m}},
    \end{equation*}
    for each $\epsilon > 0$. Hence
    \begin{align}\label{31}
        \liminf_{t \to 0^{+}} \frac{(2t)^2 d^*(t)^{2n}{k_{D}\big(\gamma(t)\big)}}{k_{\mathbb{D} \times \mathbb{B}^n}(0)} \geq 1.
    \end{align}
    From \eqref{29} and \eqref{31}, we have
    \begin{align}\label{116}
         \lim_{t \to 0^+} \frac{(2t)^2 d^*(t)^{2n}{k_{D}\big(\gamma(t)\big)}}{k_{\mathbb{D} \times \mathbb{B}^n}(0)} = 1.
    \end{align}
   Since $k_{\mathbb{D} \times \mathbb{B}^n}(0) = 1$ by Corollary \ref{k for poly ball}, we conclude
   \begin{align}
       \lim_{t \to 0^{+}} \frac{k_{D}\big(\gamma(t)\big)}{t^{-2} d^{*}(t)^{-2n}} = \frac{1}{4}.
   \end{align}
\end{proof}
\begin{Remark}
    Let $\gamma(t)$ be an $(\alpha, N)$-cone type stream approaching $0$. For each $t>0$, we perform a change of coordinates with respect to the point $\gamma(t)$, as in \cite[Section 3]{ravi}, so that $\gamma(t)$ lies on the negative $\operatorname{Re}z_1$-axis and $0$ lies on the boundary of $D$ in the new coordinates.

    We then follow the same method as in Lemma \ref{1.3} to establish the localization lemma for the Kobayashi–Eisenman volume element in these new coordinates. Since the scaling lemma \cite[Lemma 3.4]{ravi} remains valid in the new coordinates, and the Kobayashi–Eisenman volume element behaves well under biholomorphic maps, we may apply the localization and the scaling method, as in Theorem \ref{I_1 2}, to obtain the asymptotic behaviour of the Kobayashi–Eisenman volume element along an $(\alpha, N)$-cone type stream $\gamma(t)$ approaching $0$. This yields Theorem \ref{kob vol elem}.
\end{Remark}
\section*{Acknowledgements}
The first author is partially supported by the NSFC grant W2431006. The first author would like to thank Subhajit Ghosh for bringing the Hadamard inequality to his attention, which was used in the proof of Lemma~2.5. The second author thanks Diganta Borah for his invaluable suggestions.
\def\MR#1{\relax\ifhmode\unskip\spacefactor3000 \space\fi%
  \href{http://www.ams.org/mathscinet-getitem?mr=#1}{MR#1}}
 

\begin{bibdiv}
\begin{biblist}

\bib{Borah-Kar}{article}{
   author={Borah, D.},
   author={Kar, D.},
   title={Boundary behavior of the Carath\'eodory and Kobayashi-Eisenman
   volume elements},
   journal={Illinois J. Math.},
   volume={64},
   date={2020},
   number={2},
   pages={151--168},
   issn={0019-2082},
   review={\MR{4092953}},
   doi={10.1215/00192082-8303461},
}

\bib{Car}{article}{
   author={Carath\'{e}odory, C.},
   title={\"{U}ber die Abbildungen, die durch Systeme von analytischen
   Funktionen von mehreren Ver\"{a}nderlichen erzeugt werden},
   language={German},
   journal={Math. Z.},
   volume={34},
   date={1932},
   number={1},
   pages={758--792},
   issn={0025-5874},
   review={\MR{1545283}},
   doi={10.1007/BF01180619},
}

\bib{Cheung}{article}{
   author={Cheung, W. S.},
   author={Wong, B.},
   title={An integral inequality of an intrinsic measure on bounded domains
   in ${\bf C}^n$},
   journal={Rocky Mountain J. Math.},
   volume={22},
   date={1992},
   number={3},
   pages={825--836},
   issn={0035-7596},
   review={\MR{1183690}},
   doi={10.1216/rmjm/1181072698},
}

\bib{D'Angelo 1982}{article}{
   author={D'Angelo, J. P.},
   title={Real hypersurfaces, orders of contact, and applications},
   journal={Ann. of Math. (2)},
   volume={115},
   date={1982},
   number={3},
   pages={615--637},
   issn={0003-486X},
   review={\MR{657241}},
}

\bib{Dektyarev}{article}{
   author={Dektyarev, I. M.},
   title={Criterion for the equivalence of hyperbolic manifolds},
   language={Russian},
   journal={Funktsional. Anal. i Prilozhen.},
   volume={15},
   date={1981},
   number={4},
   pages={73--74},
   issn={0374-1990},
   review={\MR{0639204}},
}

\bib{Eisenman}{book}{
   author={Eisenman, D. A.},
   title={Intrinsic measures on complex manifolds and holomorphic mappings},
   series={Memoirs of the American Mathematical Society},
   volume={No. 96},
   publisher={American Mathematical Society, Providence, RI},
   date={1970},
   pages={i+80},
   review={\MR{0259165}},
}

\bib{Nikolai}{article}{
   author={Forn\ae ss, J. E.},
   author={Nikolov, N.},
   title={Strong localization of invariant metrics},
   journal={Math. Ann.},
   volume={383},
   date={2022},
   number={1-2},
   pages={353--360},
   issn={0025-5831},
   review={\MR{4444123}},
   doi={10.1007/s00208-021-02201-x},
}

\bib{Gaussier}{article}{
   author={Gaussier, H.},
   title={Tautness and complete hyperbolicity of domains in ${\bf C}^n$},
   journal={Proc. Amer. Math. Soc.},
   volume={127},
   date={1999},
   number={1},
   pages={105--116},
   issn={0002-9939},
   review={\MR{1458872}},
   doi={10.1090/S0002-9939-99-04492-5},
}

\bib{Graham}{article}{
   author={Graham, I.},
   author={Wu, H.},
   title={Characterizations of the unit ball $B^n$ in complex Euclidean
   space},
   journal={Math. Z.},
   volume={189},
   date={1985},
   number={4},
   pages={449--456},
   issn={0025-5874},
   review={\MR{0786275}},
   doi={10.1007/BF01168151},
}

\bib{Graham and Wu}{article}{
   author={Graham, I.},
   author={Wu, H.},
   title={Some remarks on the intrinsic measures of Eisenman},
   journal={Trans. Amer. Math. Soc.},
   volume={288},
   date={1985},
   number={2},
   pages={625--660},
   issn={0002-9947},
   review={\MR{0776396}},
   doi={10.2307/1999956},
}

\bib{Green-Krantz}{article}{
   author={Greene, R. E.},
   author={Krantz, S. G.},
   title={Characterizations of certain weakly pseudoconvex domains with
   noncompact automorphism groups},
   conference={
      title={Complex analysis},
      address={University Park, Pa.},
      date={1986},
   },
   book={
      series={Lecture Notes in Math.},
      volume={1268},
      publisher={Springer, Berlin},
   },
   isbn={3-540-18094-X},
   date={1987},
   pages={121--157},
   review={\MR{0907058}},
   doi={10.1007/BFb0097301},
}

\bib{ravi}{article}{
   author={Jaiswal, R. S.},
   title={Asymptotic behavior of the Bergman metric at infinite type points},
   journal={Bull. Lond. Math. Soc.},
   volume={57},
   date={2025},
   number={8},
   pages={2372--2394},
   issn={0024-6093},
   review={\MR{4946382}},
   doi={10.1112/blms.70100},
}

\bib{Jarnicki}{book}{
   author={Jarnicki, M.},
   author={Pflug, P.},
   title={Invariant distances and metrics in complex analysis},
   series={De Gruyter Expositions in Mathematics},
   volume={9},
   publisher={Walter de Gruyter \& Co., Berlin},
   date={1993},
   pages={xii+408},
   isbn={3-11-013251-6},
   review={\MR{1242120}},
}

\bib{Krantz book}{book}{
   author={Krantz, S. G.},
   title={Function theory of several complex variables},
   note={Reprint of the 1992 edition},
   publisher={AMS Chelsea Publishing, Providence, RI},
   date={2001},
   pages={xvi+564},
   isbn={0-8218-2724-3},
   review={\MR{1846625}},
}

\bib{Daowei}{article}{
   author={Ma, D.},
   title={Boundary behavior of invariant metrics and volume forms on
   strongly pseudoconvex domains},
   journal={Duke Math. J.},
   volume={63},
   date={1991},
   number={3},
   pages={673--697},
   issn={0012-7094},
   review={\MR{1121150}},
   doi={10.1215/S0012-7094-91-06328-3},
}

\bib{Nikolov 2018}{article}{
   author={Nikolov, N.},
   title={Behavior of the squeezing function near h-extendible boundary
   points},
   journal={Proc. Amer. Math. Soc.},
   volume={146},
   date={2018},
   number={8},
   pages={3455--3457},
   issn={0002-9939},
   review={\MR{3803670}},
   doi={10.1090/proc/14049},
}

\bib{Ro}{article}{
   author={Rosay, J.-P.},
   title={Sur une caract\'erisation de la boule parmi les domaines de ${\bf
   C}\sp{n}$\ par son groupe d'automorphismes},
   language={French, with English summary},
   journal={Ann. Inst. Fourier (Grenoble)},
   volume={29},
   date={1979},
   number={4},
   pages={ix, 91--97},
   issn={0373-0956},
   review={\MR{0558590}},
   doi={10.5802/aif.768},
}

\bib{Wong}{article}{
   author={Wong, B.},
   title={Characterization of the unit ball in ${\bf C}\sp{n}$ by its
   automorphism group},
   journal={Invent. Math.},
   volume={41},
   date={1977},
   number={3},
   pages={253--257},
   issn={0020-9910},
   review={\MR{0492401}},
   doi={10.1007/BF01403050},
}

\end{biblist}
\end{bibdiv}
\end{document}